\begin{document}

\newcounter{lemma}
\newcommand{\lemma}{\par \refstepcounter{lemma}%
{\bf ╦хььр \arabic{lemma}.}}

\newcounter{corollary}
\newcommand{\corollary}{\par \refstepcounter{corollary}%
{\bf ╤ыхфёЄтшх \arabic{corollary}.}}

\newcounter{remark}
\newcommand{\remark}{\par \refstepcounter{remark}%
{\bf ╟рьхўрэшх \arabic{remark}.}}

\newcounter{theorem}
\newcommand{\theorem}{\par \refstepcounter{theorem}%
{\bf ╥хюЁхьр \arabic{theorem}.}}

\newcounter{proposition}
\newcommand{\proposition}{\par \refstepcounter{proposition}%
{\bf ╧Ёхфыюцхэшх \arabic{proposition}.}}

\renewcommand{\refname}{\centerline{\bf ╤яшёюъ ышЄхЁрЄєЁ√}}

\newcommand{\proof}{{\it ─юърчрЄхы№ёЄтю.\,\,}}

\title{ {\leftline{\small ╙─╩ 517.5}}
╬с єёЄЁрэхэшш юёюсхээюёЄхщ ъырёёют ╬Ёышўр--╤юсюыхтр ё ъюэхўэ√ь
шёърцхэшхь}
\author{╧хЄЁют~┼.└., ╤рышьют~╨.╨., ╤хтюёЄ№ эют~┼.└.}
\maketitle
%
%



%
%
%
%
\maketitle
\begin{abstract}
╚чєўрхЄё  ыюъры№эюх яютхфхэшх чрьъэєЄю-юЄъЁ√Є√ї фшёъЁхЄэ√ї
юЄюсЁрцхэшщ ъырёёют ╬Ёышўр\,--\,╤юсюыхтр т ${\Bbb R}^n,$ $n\geqslant
3.$ ╙ёЄрэютыхэю, ўЄю єърчрээ√х юЄюсЁрцхэш  $f$ шьх■Є эхяЁхЁ√тэюх
яЁюфюыцхэшх т шчюышЁютрээє■ Єюўъє $x_0$ уЁрэшЎ√ юсырёЄш
$D\setminus\{x_0\},$ ъръ Єюы№ъю шї тэєЄЁхээ   фшырЄрЎш  яюЁ фър
$p\in (n-1, n]$ шьххЄ ьрцюЁрэЄє ъырёёр $FMO$ (ъюэхўэюую ёЁхфэхую
ъюыхсрэш ) т єърчрээющ Єюўъх ш, ъЁюьх Єюую, яЁхфхы№э√х ьэюцхёЄтр
юЄюсЁрцхэш  $f$ т $x_0$ ш эр $\partial D$ эх яхЁхёхър■Єё . ─Ёєушь
фюёЄрЄюўэ√ь єёыютшхь тючьюцэюёЄш эхяЁхЁ√тэюую яЁюфюыцхэш  єърчрээ√ї
юЄюсЁрцхэшщ  ты хЄё  ЁрёїюфшьюёЄ№ эхъюЄюЁюую шэЄхуЁрыр.
\end{abstract}

{\it ╩ы■ўхт√х ёыютр:} ьюфєыш ёхьхщёЄт ъЁшт√ї ш яютхЁїэюёЄхщ,
юЄюсЁрцхэш  ё юуЁрэшўхээ√ь ш ъюэхўэ√ь шёърцхэшхь, ъырёё√ ╤юсюыхтр ш
╬Ё\-ыш\-ўр--╤юсюыхтр, єёЄЁрэхэшх шчюышЁютрээ√ї юёюсхээюёЄхщ

{\it Key words:}  moduli of families of curves and surfaces,
mappings of finite and bounded distortion, Sobolev and
Orlicz-Sobolev classes, removability of isolated singularities

\section{┬тхфхэшх} ┬ эрёЄю ∙хщ чрьхЄъх шёёыхфєхЄё  эхъюЄюЁ√щ
яюфъырёё юЄюсЁрцхэшщ ё ъюэхўэ√ь шёърцхэшхь, ръЄштэю шчєўрхь√ї т
яюёыхфэхх тЁхь  Ё фюь ртЄюЁют (ёь., эряЁ., \cite{IM},
\cite{MRSY}--\cite{MRSY$_1$}, \cite{GRSY}, \cite{GG}, \cite{GS} ш
\cite{Sal$_1$}). ┬ё■фє фрыхх $D$ -- юсырёЄ№ т ${\Bbb R}^n,$
$n\geqslant 2,$ $m$ -- ьхЁр ╦хсхур т ${\Bbb R}^n$ ш ${\rm
dist\,}(A,B)$ -- хтъышфютю ЁрёёЄю эшх ьхцфє ьэюцхёЄтрьш $A$ ш $B$ т
${\Bbb R}^n,$ $d(x, y):=|x-y|,$ $d(C)$ -- хтъышфют фшрьхЄЁ ьэюцхёЄтр
$C\subset{\Bbb R}^n,$
$$B(x_0, r)=\left\{x\in{\Bbb R}^n: |x-x_0|< r\right\}\,,\quad {\Bbb
B}^n := B(0, 1)\,,$$
$$S(x_0,r) = \{ x\,\in\,{\Bbb R}^n : |x-x_0|=r\}\,,\quad{\Bbb
S}^{n-1}:=S(0, 1)\,,$$
$$A(r_1,r_2,x_0)=\{ x\,\in\,{\Bbb R}^n : r_1<|x-x_0|<r_2\}\,,$$
$\omega_{n-1}$ юсючэрўрхЄ яыю∙рф№ хфшэшўэющ ёЇхЁ√ ${\Bbb S}^{n-1}$ т
${\Bbb R}^n,$ $\Omega_{n}$ -- юс·╕ь хфшэшўэюую °рЁр ${\Bbb B}^{n}$ т
${\Bbb R}^n,$ $\overline{{\Bbb R}^n}:={\Bbb R}^n\cup\{\infty\}.$ ┬
фры№эхщ°хь тё■фє ёшьтюыюь $\Gamma(E,F,D)$ ь√ юсючэрўрхь ёхьхщёЄтю
тёхї ъЁшт√ї $\gamma:[a,b]\rightarrow\overline{{\Bbb R}^n},$ ъюЄюЁ√х
ёюхфшэ ■Є ьэюцхёЄтр $E$ ш $F$ т $D,$ Є.х. $\gamma(a)\in
E,\,\gamma(b)\in F$ ш $\gamma(t)\in D$ яЁш $t\in(a,\,b).$ ╟ряшё№
$f:D\rightarrow {\Bbb R}^n$ яЁхфяюырурхЄ, ўЄю юЄюсЁрцхэшх $f$
эхяЁхЁ√тэю т $D.$
\medskip ┬ фры№эхщ°хь ${\mathcal H}^k$ -- эюЁьшЁютрээр  $k$-ьхЁэр 
ьхЁр ╒рєёфюЁЇр т ${\Bbb R}^n,$ $1\leqslant k\leqslant n,$ $J(x,
f)={\rm det}\, f^{\,\prime}(x)$ -- {\it  ъюсшрэ юЄюсЁрцхэш } $f$ т
Єюўъх $x,$ уфх $f^{\,\prime}(x)$ -- {\it ьрЄЁшЎр ▀ъюсш} юЄюсЁрцхэш 
$f$ т Єюўъх $x.$ ╟фхё№ ш фрыхх {\it яЁхфхы№э√ь ьэюцхёЄтюь
юЄюсЁрцхэш  $f$ юЄэюёшЄхы№эю ьэюцхёЄтр $E\subset \overline{{\Bbb
R}^n}$} эрч√трхЄё  ьэюцхёЄтю
$C(f, E):=\left\{y\in {\Bbb R}^n: \,\exists\,x_0\in E:
y=\lim\limits_{m\rightarrow \infty} f(x_m), x_m\rightarrow
x_0\right\}.$ ╬ЄюсЁрцхэшх $f:D\rightarrow {\Bbb R}^n$ эрч√трхЄё 
{\it ёюїЁрэ ■∙шь уЁрэшЎє юЄюсЁрцхэшхь} (ёь. \cite[Ёрчф. 3, уы.
II]{Vu$_1$}), хёыш т√яюыэхэю ёююЄэю°хэшх $C(f,
\partial D)\subset \partial f(D).$ ╬ЄюсЁрцхэшх $f:D\rightarrow {\Bbb R}^n$
эрч√трхЄё  {\it фшёъЁхЄэ√ь}, хёыш яЁююсЁрч $f^{-1}\left(y\right)$
ърцфющ Єюўъш $y\in{\Bbb R}^n$ ёюёЄюшЄ Єюы№ъю шч шчюышЁютрээ√ї Єюўхъ.
╬ЄюсЁрцхэшх $f:D\rightarrow {\Bbb R}^n$ эрч√трхЄё  {\it юЄъЁ√Є√ь},
хёыш юсЁрч ы■сюую юЄъЁ√Єюую ьэюцхёЄтр $U\subset D$  ты хЄё  юЄъЁ√Є√ь
ьэюцхёЄтюь т ${\Bbb R}^n.$ ╬ЄьхЄшь Єръцх, ўЄю т ёыєўрх хёыш
$f:D\rightarrow {\Bbb R}^n$ юЄъЁ√Єю ш фшёъЁхЄэю, Єю чрьъэєЄюёЄ№
юЄюсЁрцхэш  $f$ ¤ътштрыхэЄэр Єюьє ўЄю $f$ ёюїЁрэ хЄ уЁрэшЎє (ёь.
\cite[ЄхюЁхьр~3.3]{Vu$_1$}). ╧єёЄ№ $U$ -- юЄъЁ√Єюх ьэюцхёЄтю,
$U\subset {\Bbb R}^n,$ $u:U\rightarrow {\Bbb R}$ -- эхъюЄюЁр 
ЇєэъЎш , $u\in L_{loc}^{\,1}(U).$ ╧Ёхфяюыюцшь, ўЄю эрщф╕Єё  ЇєэъЎш 
$v\in L_{loc}^{\,1}(U),$ Єрър  ўЄю
$\int\limits_U \frac{\partial \varphi}{\partial x_i}(x)u(x)dm(x)=
-\int\limits_U \varphi(x)v(x)dm(x)$
фы  ы■сющ ЇєэъЎшш $\varphi\in C_1^{\,0}(U).$ ╥юуфр сєфхь уютюЁшЄ№,
ўЄю ЇєэъЎш  $v$  ты хЄё  {\it юсюс∙╕ээющ яЁюшчтюфэющ яхЁтюую яюЁ фър
ЇєэъЎшш $u$ яю яхЁхьхээющ $x_i$} ш юсючэрўрЄ№ ёшьтюыюь:
$\frac{\partial u}{\partial x_i}(x):=v.$ ╘єэъЎш  $u\in
W_{loc}^{1,1}(U),$ хёыш $u$ шьххЄ юсюс∙╕ээ√х яЁюшчтюфэ√х яхЁтюую
яюЁ фър яю ърцфющ шч яхЁхьхээ√ї т $U,$ ъюЄюЁ√х  ты ■Єё  ыюъры№эю
шэЄхуЁшЁєхь√ьш т $U.$

\medskip
╧єёЄ№ $G$ -- юЄъЁ√Єюх ьэюцхёЄтю т ${\Bbb R}^n.$ ╬ЄюсЁрцхэшх
$f:G\rightarrow {\Bbb R}^n$ яЁшэрфыхцшЄ {\it ъырёёє ╤юсюыхтр}
$W^{1,1}_{loc}(G),$ яш°єЄ $f\in W^{1,1}_{loc}(G),$ хёыш тёх
ъююЁфшэрЄэ√х ЇєэъЎшш $f=(f_1,\ldots,f_n)$ юсырфр■Є юсюс∙╕ээ√ьш
ўрёЄэ√ьш яЁюшчтюфэ√ьш яхЁтюую яюЁ фър, ъюЄюЁ√х ыюъры№эю шэЄхуЁшЁєхь√
т $G$ т яхЁтющ ёЄхяхэш. ╬ЄюсЁрцхэшх $f:D\rightarrow {\Bbb R}^n$
эрч√трхЄё  {\it юЄюсЁрцхэшхь ё ъюэхўэ√ь шёърцхэшхь}, хёыш $f\in
W_{loc}^{1,1}(D)$ ш фы  эхъюЄюЁющ ЇєэъЎшш $K(x): D\rightarrow
[1,\infty)$ т√яюыэхэю єёыютшх
$\Vert f^{\,\prime}\left(x\right) \Vert^{n}\leqslant K(x)\cdot
|J(x,f)|$
яЁш яюўЄш тёхї $x\in D,$ уфх $\Vert
f^{\,\prime}(x)\Vert=\max\limits_{h\in {\Bbb R}^n \setminus \{0\}}
\frac {|f^{\,\prime}(x)h|}{|h|}$ (ёь. \cite[я.~6.3, уы.~VI]{IM}.
╧юырурхь $l\left(f^{\,\prime}(x)\right)\,=\,\,\,\min\limits_{h\in
{\Bbb R}^n \setminus \{0\}} \frac {|f^{\,\prime}(x)h|}{|h|}.$
╬ЄьхЄшь, ўЄю фы  юЄюсЁрцхэшщ ё ъюэхўэ√ь шёърцхэшхь ш яЁюшчтюы№эюую
$p\geqslant 1$ ъюЁЁхъЄэю юяЁхфхыхэр ш яюўЄш тё■фє ъюэхўэр Єръ
эрч√трхьр  {\it тэєЄЁхээ   фшырЄрЎш  $K_{I, p}(x,f)$ юЄюсЁрцхэш  $f$
яюЁ фър $p$ т Єюўъх $x$}, юяЁхфхы хьр  ЁртхэёЄтрьш
\begin{equation}\label{eq0.1.1A}
K_{I, p}(x,f)\quad =\quad\left\{
\begin{array}{rr}
\frac{|J(x,f)|}{{l\left(f^{\,\prime}(x)\right)}^p}, & J(x,f)\ne 0,\\
1,  &  f^{\,\prime}(x)=0, \\
\infty, & \text{т\,\,юёЄры№э√ї\,\,ёыєўр ї}
\end{array}
\right.\,.
\end{equation}
╧єёЄ№ $\varphi:[0,\infty)\rightarrow[0,\infty)$ -- эхєс√тр■∙р 
ЇєэъЎш , $f\in W_{loc}^{1,1}.$ ┴єфхь уютюЁшЄ№, ўЄю $f:D\rightarrow
{\Bbb R}^n$ яЁшэрфыхцшЄ ъырёёє $W^{1,\varphi}_{loc},$ яш°хь $f\in
W^{1,\varphi}_{loc},$ хёыш $\int\limits_{G}\varphi\left(|\nabla
f(x)|\right)\,dm(x)<\infty$ фы  ы■сющ ъюьяръЄэющ яюфюсырёЄш
$G\subset D,$ уфх $|\nabla
f(x)|=\sqrt{\sum\limits_{i=1}^n\sum\limits_{j=1}^n\left(\frac{\partial
f_i}{\partial x_j}\right)^2}.$ ╩ырёё $W^{1,\varphi}_{loc}$
эрч√трхЄё  ъырёёюь {\it ╬Ёышўр--╤юсюыхтр}. ╨рёёьюЄЁшь ёыхфє■∙є■
чрфрўє:

\medskip
{\it яєёЄ№ $x_0\in D$ ш $f:D\setminus\{x_0\}\rightarrow {\Bbb R}^n$
-- юЄюсЁрцхэшх ъырёёр $W^{1,\varphi}_{loc}(D\setminus\{x_0\})$ ё
ъюэхўэ√ь шёърцхэшхь, Єюуфр яЁш ъръшї єёыютш ї юЄюсЁрцхэшх $f$ ьюцхЄ
с√Є№ яЁюфюыцхэю яю эхяЁхЁ√тэюёЄш т Єюўъє $x_0 ?$ }

\medskip
╬ЄтхЄ эр ¤ЄюЄ тюяЁюё т ёыєўрх, ъюуфр юЄюсЁрцхэшх $f$  ты хЄё 
уюьхюьюЁЇшчьюь с√ы эрщфхэ эрьш эхёъюы№ъю Ёрэхх (ёь.
\cite[ЄхюЁхьр~5]{KRSS} ш \cite[ЄхюЁхьр~9.3]{MRSY}). ╤ЄЁхь ё№ єёшышЄ№
¤ЄюЄ Ёхчєы№ЄрЄ, т эрёЄю ∙хщ ёЄрЄ№х ь√ ЁрёёьрЄЁштрхь сюыхх °шЁюъшщ
ъырёё чрьъэєЄю-юЄъЁ√Є√ї фшёъЁхЄэ√ї юЄюсЁрцхэшщ. ═шцх сєфхЄ яюърчрэю,
ўЄю фы  єърчрээюую ъырёёр чръы■ўхэшх  ю эхяЁхЁ√тэюь яЁюфюыцхэшш т
шчюышЁютрээє■ Єюўъє уЁрэшЎ√ Єръцх тхЁэю, яю ъЁрщэхщ ьхЁх, т ёыєўрх
т√яюыэхэш  ёыхфє■∙хую фюяюыэшЄхы№эюую єёыютш : $C(f, x_0)\cap C(f,
\partial D)=\varnothing.$ ╨рчєьххЄё , яЁюшчтюы№э√х уюьхюьюЁЇшчь√
єфютыхЄтюЁ ■Є ЄЁхсютрэш ь чрьъэєЄюёЄш, фшёъЁхЄэюёЄш, юЄъЁ√ЄюёЄш, р
Єръцх єърчрээюьє юуЁрэшўхэш■ эр яЁхфхы№э√х ьэюцхёЄтр. ╤ фЁєующ
ёЄюЁюэ√, ыхуъю єърчрЄ№ яЁшьхЁ√ эхуюьхюьюЁЇэ√ї чрьъэєЄю-юЄъЁ√Є√ї
фшёъЁхЄэ√ї юЄюсЁрцхэшщ, фы  ъюЄюЁ√ї Єръцх $C(f, x_0)\cap C(f,
\partial D)=\varnothing.$ ╥ръют√ь, эряЁшьхЁ,  ты хЄё  юЄюсЁрцхэшх ё
юуЁрэшўхээ√ь шёърцхэшхь, эрч√трхьюх <<чръЁєўштрэшхь тюъЁєу юёш>> ш
чрфртрхьюх т ЎшышэфЁшўхёъшї ъююЁфшэрЄрї т тшфх $f_m(x)=(r\cos
m\varphi, r\sin m\varphi, x_3,\ldots, x_n),$ $x=(x_1,\ldots, x_n)\in
{\Bbb B}^n,$ $r=|z|,$ $\varphi=\arg z,$ $z=x_1+ix_2,$ $m\in {\Bbb
N}.$ (╟фхё№ $x_0=0$). ═х ыш°эшь сєфхЄ юЄьхЄшЄ№, ўЄю т яЁюшчтюы№эющ
ьхэ№°хщ юсырёЄш єърчрээюх юЄюсЁрцхэшх $f_m$ яЁш эхъюЄюЁюь $m$ єцх эх
чрьъэєЄю. ╤ърцхь, ¤Єю юЄэюёшЄё  ъ юсырёЄш $G:=B(e_1/2, 1/2)\subset
{\Bbb B}^n,$ $e_1=(1,0,\ldots,0),$ уфх єёыютшх $C(f, z_0)\cap C(f,
\partial G)=\varnothing$ Єръцх ьюцхЄ эрЁє°рЄ№ё  фы  эхъюЄюЁющ Єюўъш $z_m\in
G$ ш сюы№°шї $m.$ ─Ёєующ яЁюёЄющ яЁшьхЁ эхуюьхюьюЁЇэюую
чрьъэєЄю-юЄъЁ√Єюую фшёъЁхЄэюую юЄюсЁрцхэш , фы  ъюЄюЁюую юуЁрэшўхэшх
$C(f, x_0)\cap C(f,
\partial D)=\varnothing$ т√яюыэ хЄё , ьюцхЄ с√Є№ фрэ т тшфх $f(z)=z^n,$ $z\in
{\Bbb B}^2\subset {\Bbb C},$ уфх $x_0:=0.$

\medskip
╤ЇюЁьєышЁєхь уыртэ√щ Ёхчєы№ЄрЄ эрёЄю ∙хщ чрьхЄъш.

\medskip
\begin{theorem}\label{th1}
{\, ╧єёЄ№ $n\geqslant 3,$ $D$ -- юуЁрэшўхээр  юсырёЄ№ т ${\Bbb
R}^n,$ $n-1<\alpha\leqslant n,$ $x_0\in D,$ Єюуфр ърцфюх юЄъЁ√Єюх,
фшёъЁхЄэюх ш чрьъэєЄюх юуЁрэшўхээюх юЄюсЁрцхэшх
$f:D\setminus\{x_0\}\rightarrow {\Bbb R}^n$ ъырёёр $W_{loc}^{1,
\varphi}(D\setminus\{x_0\})$ ё ъюэхўэ√ь шёърцхэшхь Єръюх, ўЄю $C(f,
x_0)\cap C(f,
\partial D)=\varnothing,$ яЁюфюыцрхЄё  т Єюўъє $x_0$
эхяЁхЁ√тэ√ь юсЁрчюь фю юЄюсЁрцхэш  $f:D\rightarrow {\Bbb R}^n,$ хёыш
\begin{equation}\label{eqOS3.0a}
\int\limits_{1}^{\infty}\left(\frac{t}{\varphi(t)}\right)^
{\frac{1}{n-2}}dt<\infty
\end{equation}
ш, ъЁюьх Єюую, эрщф╕Єё  ЇєэъЎш  $Q\in L_{loc}^1(D),$ Єрър  ўЄю
$K_{I,\alpha}(x, f)\leqslant Q(x)$ яЁш яюўЄш тёхї $x\in D$ ш яЁш
эхъюЄюЁюь $\varepsilon_0>0,$ $\varepsilon_0<{\rm dist}(x_0,
\partial D),$ т√яюыэхэю ёыхфє■∙хх єёыютшх ЁрёїюфшьюёЄш шэЄхуЁрыр:
\begin{equation}\label{eq9}
\int\limits_{0}^{\varepsilon_0}
\frac{dt}{t^{\frac{n-1}{\alpha-1}}q_{x_0}^{\,\frac{1}{\alpha-1}}(t)}=\infty\,.
\end{equation}
╟фхё№
$q_{x_0}(r):=\frac{1}{\omega_{n-1}r^{n-1}}\int\limits_{|x-x_0|=r}Q(x)\,d{\mathcal
H}^{n-1}$ юсючэрўрхЄ ёЁхфэхх шэЄхуЁры№эюх чэрўхэшх ЇєэъЎшш $Q$ эрф
ёЇхЁющ $S(x_0, r).$ ┬ ўрёЄэюёЄш, чръы■ўхэшх ЄхюЁхь√ \ref{th1}
 ты хЄё  тхЁэ√ь, хёыш $q_{x_0}(r)=\,O\left({\left(
\log{\frac{1}{r}}\right)}^{n-1}\right)$ яЁш $r\rightarrow 0.$}
\end{theorem}

\medskip
\begin{remark}\label{rem2}
╙ёыютшх (\ref{eqOS3.0a}) яЁшэрфыхцшЄ ╩ры№фхЁюэє ш шёяюы№чютрыюё№ фы 
Ёх°хэш  чрфрў эхёъюы№ъю шэюую яырэр (ёь. \cite{Cal}).
\end{remark}

\medskip
╧Ёш $p=n=2$ чръы■ўхэшх ЄхюЁхь√ \ref{th1} ьюцэю эхёъюы№ъю єёшышЄ№.
─ы  ¤Єющ Ўхыш ттхф╕ь ёыхфє■∙шх юсючэрўхэш . ─ы  ъюьяыхъёэючэрўэющ
ЇєэъЎшш $f:D\rightarrow {\Bbb C},$ чрфрээющ т юсырёЄш $D\subset
{\Bbb C},$ шьх■∙хщ ўрёЄэ√х яЁюшчтюфэ√х яю $x$ ш $y$ яЁш яюўЄш тёхї
$z=x + iy,$ яюырурхь $\overline{\partial} f= f_{\overline{z}} =
\left(f_x + if_y\right)/2$ ш $\partial f = f_z = \left(f_x -
if_y\right)/2.$ ╧юырурхь $\mu(z)=\mu_f(z)=f_{\overline{z}}/f_z,$ яЁш
$f_z \ne 0$ ш $\mu(z)=0$ т яЁюЄштэюь ёыєўрх. ╙ърчрээр 
ъюьяыхъёэючэрўэр  ЇєэъЎш  $\mu$ эрч√трхЄё  {\it ъюьяыхъёэющ
фшырЄрЎшхщ} юЄюсЁрцхэш  $f$ т Єюўъх $z.$ {\it ╠ръ\-ёш\-ьры№\-эющ
фшырЄрЎшхщ} юЄюсЁрцхэш  $f$ т Єюўъх $z$ эрч√трхЄё  ёыхфє■∙р 
ЇєэъЎш :
$K_{\mu_f}(z)\quad=\quad K_{\mu}(z)\quad=\quad\frac{1 + |\mu
(z)|}{|1 - |\mu\,(z)||}.$ ╟рьхЄшь, ўЄю $J(f,
z)=|f_z|^2-|f_{\overline{z}}|^2,$ уфх $J(f, z):={\rm
det\,}f^{\,\prime}(z),$
ўЄю ьюцхЄ с√Є№ яЁютхЁхэю яЁ ь√ь яюфёў╕Єюь (ёь., эряЁ.,
\cite[яєэъЄ~C, уы.~I]{A}). ╩Ёюьх Єюую, чрьхЄшь, ўЄю $K_I(z,
f)=K_{\mu}(z).$

\medskip
\begin{theorem}\label{th2}
{\, ╧єёЄ№ $z_0\in D\subset {\Bbb C},$ $a, b \in {\Bbb C},$ $a\ne b,$
Єюуфр ърцфюх юЄъЁ√Єюх фшёъЁхЄэюх юЄюсЁрцхэшх
$f:D\setminus\{z_0\}\rightarrow {\Bbb C}\setminus \{a\cup b\}$
ъырёёр $W_{loc}^{1, 1}$ ё ъюэхўэ√ь шёърцхэшхь яЁюфюыцрхЄё  т Єюўъє
$z_0$ эхяЁхЁ√тэ√ь юсЁрчюь фю юЄюсЁрцхэш  $f:D\rightarrow
\overline{{\Bbb C}},$ хёыш эрщф╕Єё  ЇєэъЎш  $Q\in L_{loc}^1(D),$
Єрър  ўЄю $K_{\mu}(z)\leqslant Q(z)$ яЁш яюўЄш тёхї $z\in D$ ш яЁш
эхъюЄюЁюь $\varepsilon_0>0,$ $\varepsilon_0<{\rm dist}(z_0,
\partial D),$ т√яюыэхэю ёыхфє■∙хх єёыютшх ЁрёїюфшьюёЄш шэЄхуЁрыр
(\ref{eq9}), уфх $q_{z_0}(r):=\frac{1}{2\pi
r}\int\limits_{|z-z_0|=r}Q(z)\,d{\mathcal H}^{1}$ -- ёЁхфэхх
шэЄхуЁры№эюх чэрўхэшх ЇєэъЎшш $Q$ эрф юъЁєцэюёЄ№■ $S(z_0, r).$ ┬
ўрёЄэюёЄш, чръы■ўхэшх ЄхюЁхь√ \ref{th2}  ты хЄё  тхЁэ√ь, хёыш
$q_{z_0}(r)=\,O\left(\log{\frac{1}{r}}\right)$ яЁш $r\rightarrow
0.$}
\end{theorem}

\section{┬ёяюьюурЄхы№э√х ётхфхэш , юёэютэ√х ыхьь√ ш фюърчрЄхы№ёЄтю
ЄхюЁхь√ \ref{th1}} ─юърчрЄхы№ёЄтю юёэютэюую Ёхчєы№ЄрЄр ёЄрЄ№ш
юяшЁрхЄё  эр эхъюЄюЁ√щ ряярЁрЄ, ёєЄ№ ъюЄюЁюую шчырурхЄё  эшцх (ёь.,
эряЁ., \cite{MRSY}). ═ряюьэшь эхъюЄюЁ√х юяЁхфхыхэш , ёт чрээ√х ё
яюэ Єшхь яютхЁїэюёЄш, шэЄхуЁрыр яю яютхЁїэюёЄш, р Єръцх ьюфєыхщ
ёхьхщёЄт ъЁшт√ї ш яютхЁїэюёЄхщ.

\medskip ╧єёЄ№ $\omega$ -- юЄъЁ√Єюх ьэюцхёЄтю т $\overline{{\Bbb
R}^k}:={\Bbb R}^k\cup\{\infty\},$ $k=1,\ldots,n-1.$ ═хяЁхЁ√тэюх
юЄюсЁрцхэшх $S:\omega\rightarrow{\Bbb R}^n$ сєфхь эрч√трЄ№ {\it
$k$-ьхЁэющ яютхЁїэюёЄ№■} $S$ т ${\Bbb R}^n.$ ╫шёыю яЁююсЁрчют
$N(y, S)={\rm card}\,S^{-1}(y)={\rm card}\,\{x\in\omega:S(x)=y\},\
y\in{\Bbb R}^n$ сєфхь эрч√трЄ№ {\it ЇєэъЎшхщ ъЁрЄэюёЄш} яютхЁїэюёЄш
$S.$ ─Ёєушьш ёыютрьш, $N(y, S)$ -- ъЁрЄэюёЄ№ эръЁ√Єш  Єюўъш $y$
яютхЁїэюёЄ№■ $S.$ ╧єёЄ№ $\rho:{\Bbb R}^n\rightarrow\overline{{\Bbb
R}^+}$ -- сюЁхыхтёър  ЇєэъЎш , т Єръюь ёыєўрх шэЄхуЁры юЄ ЇєэъЎшш
$\rho$ яю яютхЁїэюёЄш $S$ юяЁхфхы хЄё  ЁртхэёЄтюь:  $\int\limits_S
\rho\,d{\mathcal{A}}:=\int\limits_{{\Bbb R}^n}\rho(y)\,N(y,
S)\,d{\mathcal H}^ky.$
╧єёЄ№ $\Gamma$ -- ёхьхщёЄтю $k$-ьхЁэ√ї яютхЁїэюёЄхщ $S.$ ┴юЁхыхтёъє■
ЇєэъЎш■ $\rho:{\Bbb R}^n\rightarrow\overline{{\Bbb R}^+}$ сєфхь
эрч√трЄ№ {\it фюяєёЄшьющ} фы  ёхьхщёЄтр $\Gamma,$ ёюъЁ. $\rho\in{\rm
adm}\,\Gamma,$ хёыш
\begin{equation}\label{eq8.2.6}\int\limits_S\rho^k\,d{\mathcal{A}}\geqslant 1\end{equation}
фы  ърцфющ яютхЁїэюёЄш $S\in\Gamma.$ ╧єёЄ№ $p\geqslant 1,$ Єюуфр
{\it $p$-ьюфєыхь} ёхьхщёЄтр $\Gamma$ эрчют╕ь тхышўшэє
$$M_p(\Gamma)=\inf\limits_{\rho\in{\rm adm}\,\Gamma}
\int\limits_{{\Bbb R}^n}\rho^p(x)\,dm(x)\,.$$ ╟рьхЄшь, ўЄю
$p$-ьюфєы№ ёхьхщёЄт яютхЁїэюёЄхщ, юяЁхфхы╕ээ√щ Єръшь юсЁрчюь,
яЁхфёЄрты хЄ ёюсющ тэх°э■■ ьхЁє т яЁюёЄЁрэёЄтх тёхї $k$-ьхЁ\-э√ї
яютхЁїэюёЄхщ (ёь. \cite{Fu}). ├ютюЁ Є, ўЄю эхъюЄюЁюх ётющёЄтю $P$
т√яюыэхэю фы  {\it $p$-яюўЄш тёхї яютхЁїэюёЄхщ} юсырёЄш $D,$ хёыш
юэю шьххЄ ьхёЄю фы  тёхї яютхЁїэюёЄхщ, ыхцр∙шї т $D,$ ъЁюьх, с√Є№
ьюцхЄ, эхъюЄюЁюую шї яюфёхьхщёЄтр, $p$-ьюфєы№ ъюЄюЁюую Ёртхэ эєы■.
╧Ёш $p=n$ яЁшёЄртър <<$p$->> т ёыютрї <<$p$-яюўЄш тёхї...>>, ъръ
яЁртшыю, юяєёърхЄё . ┬ ўрёЄэюёЄш, уютюЁ Є, ўЄю эхъюЄюЁюх ётющёЄтю
т√яюыэхэю фы  {\it $p$-яюўЄш тёхї ъЁшт√ї} юсырёЄш $D$, хёыш юэю
шьххЄ ьхёЄю фы  тёхї ъЁшт√ї, ыхцр∙шї т $D$, ъЁюьх, с√Є№ ьюцхЄ,
эхъюЄюЁюую шї яюфёхьхщёЄтр, $p$-ьюфєы№ ъюЄюЁюую Ёртхэ эєы■.

┴єфхь уютюЁшЄ№, ўЄю шчьхЁшьр  яю ╦хсхує ЇєэъЎш  $\rho:{\Bbb
R}^n\rightarrow\overline{{\Bbb R}^+}$ {\it $p$-юсюс∙╕ээю фюяєёЄшьр}
фы  ёхьхщёЄтр $\Gamma$ $k$-ьхЁэ√ї яютхЁїэюёЄхщ $S$ т ${\Bbb R}^n,$
ёюъЁ. $\rho\in{\rm ext}_p\,{\rm adm}\,\Gamma,$ хёыш ёююЄэю°хэшх
(\ref{eq8.2.6}) т√яюыэхэю фы  $p$-яюўЄш тёхї яютхЁїэюёЄхщ $S$
ёхьхщёЄтр $\Gamma.$ {\it ╬сюс∙╕ээ√щ $p$-ьюфєы№} $\overline
M_p(\Gamma)$ ёхьхщёЄтр $\Gamma$ юяЁхфхы хЄё  ЁртхэёЄтюь
$$\overline{M_p}(\Gamma)= \inf\int\limits_{{\Bbb
R}^n}\rho^p(x)\,dm(x)\,,$$
уфх Єюўэр  эшцэ   уЁрэ№ схЁ╕Єё  яю тёхь ЇєэъЎш ь $\rho\in{\rm
ext}_p\,{\rm adm}\,\Gamma.$ ╬ўхтшфэю, ўЄю яЁш ърцфюь
$p\in(0,\infty),$ $k=1,\ldots,n-1,$ ш ърцфюую ёхьхщёЄтр $k$-ьхЁэ√ї
яютхЁїэюёЄхщ $\Gamma$ т ${\Bbb R}^n,$ т√яюыэхэю ЁртхэёЄтю
$\overline{M_p}(\Gamma)=M_p(\Gamma).$

╤ыхфє■∙шщ ъырёё юЄюсЁрцхэшщ яЁхфёЄрты хЄ ёюсющ юсюс∙хэшх
ътрчшъюэЇюЁьэ√ї юЄюсЁрцхэшщ т ёь√ёых ъюы№Ўхтюую юяЁхфхыхэш  яю
├хЁшэує (\cite{Ge$_3$}) ш юЄфхы№эю шёёыхфєхЄё  (ёь., эряЁ.,
\cite[уыртр~9]{MRSY}). ╧єёЄ№ $p\geqslant 1,$ $D$ ш $D^{\,\prime}$ --
чрфрээ√х юсырёЄш т $\overline{{\Bbb R}^n},$ $n\geqslant 2,$
$x_0\in\overline{D}\setminus\{\infty\}$ ш $Q:D\rightarrow(0,\infty)$
-- шчьхЁшьр  яю ╦хсхує ЇєэъЎш . ┴єфхь уютюЁшЄ№, ўЄю $f:D\rightarrow
D^{\,\prime}$ -- {\it эшцэхх $Q$-юЄюсЁрцхэшх т Єюўъх $x_0$
юЄэюёшЄхы№эю $p$-ьюфєы ,} ъръ Єюы№ъю
\begin{equation}\label{eq1A}
M_p(f(\Sigma_{\varepsilon}))\geqslant \inf\limits_{\rho\in{\rm
ext}_p\,{\rm adm}\,\Sigma_{\varepsilon}}\int\limits_{D\cap
A(\varepsilon, r_0, x_0)}\frac{\rho^p(x)}{Q(x)}\,dm(x)
\end{equation}
фы  ърцфюую ъюы№Ўр $A(\varepsilon, r_0, x_0),$ $r_0\in(0,d_0),$
$d_0=\sup\limits_{x\in D}|x-x_0|,$
уфх $\Sigma_{\varepsilon}$ юсючэрўрхЄ ёхьхщёЄтю тёхї яхЁхёхўхэшщ
ёЇхЁ $S(x_0, r)$ ё юсырёЄ№■ $D,$ $r\in (\varepsilon, r_0).$ ╧ЁшьхЁ√
Єръшї юЄюсЁрцхэшщ эхёыюцэю єърчрЄ№ (ёь. ЄхюЁхьє \ref{thOS4.1} эшцх).

╬ЄьхЄшь, ўЄю т√Ёрцхэш  <<яюўЄш тёхї ъЁшт√ї>> ш <<яюўЄш тёхї
яю\-тхЁ\-ї\-эю\-ё\-Єхщ>> т юЄфхы№э√ї ёыєўр ї ьюуєЄ шьхЄ№ фтх
Ёрчышўэ√х шэЄхЁяЁхЄрЎшш (т ўрёЄэюёЄш, хёыш Ёхў№ шф╕Є ю ёхьхщёЄтх
ёЇхЁ, Єю <<яюўЄш тёхї>> ьюцхЄ яюэшьрЄ№ё  ъръ юЄэюёшЄхы№эю ьэюцхёЄтр
чэрўхэшщ $r,$ Єръ ш $p$-ьюфєы  ёхьхщёЄтр ёЇхЁ, ЁрёёьрЄЁштрхьюую ъръ
ўрёЄэ√щ ёыєўрщ ёхьхщёЄтр яютхЁїэюёЄхщ). ╤ыхфє■∙хх єЄтхЁцфхэшх тэюёшЄ
эхъюЄюЁє■  ёэюёЄ№ ьхцфє єърчрээ√ьш шэЄхЁяЁхЄрЎш ьш ш ьюцхЄ с√Є№
єёЄрэютыхэю яюыэюёЄ№■ яю рэрыюушш ё \cite[ыхььр~9.1]{MRSY}.

\medskip
\begin{lemma}\label{lemma8.2.11}{\, ╧єёЄ№ $p\geqslant 1,$ $x_0\in D.$ ┼ёыш эхъюЄюЁюх
ётющёЄтю $P$ шьххЄ ьхёЄю фы  $p$-яюўЄш тёхї ёЇхЁ $D(x_0, r):=S(x_0,
r)\cap D,$ уфх <<яюўЄш тёхї>> яюэшьрхЄё  т ёь√ёых ьюфєы  ёхьхщёЄт
яютхЁїэюёЄхщ, Єю $P$ Єръцх шьххЄ ьхёЄю фы  яюўЄш тёхї ёЇхЁ $D(x_0,
r)$ юЄэюёшЄхы№эю ышэхщэющ ьхЁ√ ╦хсхур яю ярЁрьхЄЁє $r\in {\Bbb R }.$
╬сЁрЄэю, яєёЄ№ $P$ шьххЄ ьхёЄю фы  яюўЄш тёхї ёЇхЁ $D(x_0,
r):=S(x_0, r)\cap D$ юЄэюёшЄхы№эю ышэхщэющ ьхЁ√ ╦хсхур яю $r\in
{\Bbb R},$ Єюуфр $P$ Єръцх шьххЄ ьхёЄю фы  $p$-яюўЄш тёхї
яютхЁїэюёЄхщ $D(x_0, r):=S(x_0, r)\cap D$ т ёь√ёых ьюфєы  ёхьхщёЄт
яютхЁїэюёЄхщ.}\end{lemma}

\medskip
╤ыхфє■∙хх єЄтхЁцфхэшх юсыхуўрхЄ яЁютхЁъє схёъюэхўэющ ёхЁшш
эхЁртхэёЄт т (\ref{eq1A}) ш ьюцхЄ с√Є№ єёЄрэютыхэю рэрыюушўэю
фюърчрЄхы№ёЄтє \cite[ЄхюЁхьр~9.2]{MRSY} (ёь. Єръцх
\cite[ЄхюЁхьр~6.1]{GS}).

\medskip
\begin{lemma}\label{lemma4}{\,
╧єёЄ№ $D,$  $D^{\,\prime}\subset\overline{{\Bbb R}^n},$
$x_0\in\overline{D}\setminus\{\infty\}$ ш $Q:D\rightarrow(0,\infty)$
-- шчьхЁшьр  яю ╦хсхує ЇєэъЎш . ╬ЄюсЁрцхэшх $f:D\rightarrow
D^{\,\prime}$  ты хЄё  эшцэшь $Q$-юЄюсЁрцхэшхь юЄэюёшЄхы№эю
$p$-ьюфєы  т Єюўъх $x_0,$ $p>n-1,$ Єюуфр ш Єюы№ъю Єюуфр, ъюуфр
%
$M_p(f(\Sigma_{\varepsilon}))\geqslant\int\limits_{\varepsilon}^{r_0}
\frac{dr}{||\,Q||_{s}(r)}\quad\forall\ \varepsilon\in(0,r_0)\,,\
r_0\in(0,d_0),$ $d_0=\sup\limits_{x\in D}|x-x_0|,$
%
$s=\frac{n-1}{p-n+1},$ уфх, ъръ ш т√°х, $\Sigma_{\varepsilon}$
юсючэрўрхЄ ёхьхщёЄтю тёхї яхЁхёхўхэшщ ёЇхЁ $S(x_0, r)$ ё юсырёЄ№■
$D,$ $r\in (\varepsilon, r_0),$
$ \Vert
Q\Vert_{s}(r)=\left(\int\limits_{D(x_0,r)}Q^{s}(x)\,d{\mathcal{A}}\right)^{\frac{1}{s}}$
-- $L_{s}$-эюЁьр ЇєэъЎшш $Q$ эрф ёЇхЁющ $D(x_0,r)=\{x\in D:
|x-x_0|=r\}=D\cap S(x_0,r)$.}
\end{lemma}

\medskip
╧єёЄ№ $G$ -- юЄъЁ√Єюх ьэюцхёЄтю т ${\Bbb R}^n$ ш $I=\{x\in{\Bbb
R}^n:a_i<x_i<b_i,i=1,\ldots,n\}$ -- юЄъЁ√Є√щ $n$-ьхЁэ√щ шэЄхЁтры.
╬ЄюсЁрцхэшх $f:I\rightarrow{\Bbb R}^n$ {\it яЁшэрфыхцшЄ ъырёёє
$ACL$} ({\it рсёюы■Єэю эхяЁхЁ√тэю эр ышэш ї}), хёыш $f$ рсёюы■Єэю
эхяЁхЁ√тэю эр яюўЄш тёхї ышэхщэ√ї ёхуьхэЄрї т $I,$ ярЁрыыхы№э√ї
ъююЁфшэрЄэ√ь юё ь. ╬ЄюсЁрцхэшх $f:G\rightarrow{\Bbb R}^n$ {\it
яЁшэрфыхцшЄ ъырёёє $ACL$} т $G,$ ъюуфр ёєцхэшх $f|_I$ яЁшэрфыхцшЄ
ъырёёє $ACL$ фы  ърцфюую шэЄхЁтрыр $I,$ $\overline{I}\subset G.$

\medskip
═ряюьэшь, ўЄю {\it ъюэфхэёрЄюЁюь} эрч√тр■Є ярЁє
$E=\left(A,\,C\right),$ уфх $A$ -- юЄъЁ√Єюх ьэюцхёЄтю т ${\Bbb
R}^n,$ р $C$ -- ъюьяръЄэюх яюфьэюцхёЄтю $A.$ {\it иьъюёЄ№■}
ъюэфхэёрЄюЁр $E$ яюЁ фър $p\geqslant 1$ эрч√трхЄё  ёыхфє■∙р 
тхышўшэр:
%
%
${\rm cap}_p\,E={\rm cap}_p\,\left(A,\,C\right)= \inf\limits_{u\in
W_0(E)}\,\,\int\limits_A |\nabla u(x)|^p\,\,dm(x),$
%
уфх $W_0(E)=W_0\left(A,\,C\right)$ -- ёхьхщёЄтю эхюЄЁшЎрЄхы№э√ї
эхяЁхЁ√тэ√ї ЇєэъЎшщ $u:A\rightarrow{\Bbb R}$ ё ъюьяръЄэ√ь эюёшЄхыхь
т $A,$ Єръшї ўЄю $u(x)\geqslant 1$ яЁш $x\in C$ ш $u\in ACL.$
╟фхё№, ъръ юс√ўэю, $|\nabla
u|={\left(\sum\limits_{i=1}^n\,{\left(\partial_i u\right)}^2
\right)}^{1/2}.$ ╤ыхфє■∙хх єЄтхЁцфхэшх шьххЄ трцэюх чэрўхэшх фы 
фюърчрЄхы№ёЄтр ьэюушї Ёхчєы№ЄрЄют эрёЄю ∙хщ ЁрсюЄ√ (ёь.
\cite[яЁхфыюцхэшх~10.2, уы.~II]{Ri}).

\medskip
\begin{proposition}\label{pr1*!}{\, ╧єёЄ№ $E=(A,\,C)$ --
яЁюшчтюы№э√щ ъюэфхэёрЄюЁ т ${\Bbb R}^n$ ш яєёЄ№ $\Gamma_E$ --
ёхьхщёЄтю тёхї ъЁшт√ї тшфр $\gamma:[a,\,b)\rightarrow A$ Єръшї, ўЄю
$\gamma(a)\in C$ ш $|\gamma|\cap\left(A\setminus
F\right)\ne\varnothing$ фы  яЁюшчтюы№эюую ъюьяръЄр $F\subset A.$
╥юуфр
${\rm cap}_p\,E=M_p(\Gamma_E).$
}
\end{proposition}

\medskip
╤ыхфє■∙шх трцэ√х ётхфхэш , ърёр■∙шхё  ╕ьъюёЄш ярЁ√ ьэюцхёЄт
юЄэюёшЄхы№эю юсырёЄш, ьюуєЄ с√Є№ эрщфхэ√ т ЁрсюЄх ┬.~╓шьхЁр
\cite{Zi}. ╧єёЄ№ $G$ -- юуЁрэшўхээр  юсырёЄ№ т ${\Bbb R}^n$ ш $C_{0}
, C_{1}$ -- эхяхЁхёхър■∙шхё  ъюьяръЄэ√х ьэюцхёЄтр, ыхцр∙шх т
чрь√ърэшш $G.$ ╧юырурхь  $R=G \setminus (C_{0} \cup C_{1})$ ш
$R^{\,*}=R \cup C_{0}\cup C_{1},$ Єюуфр {\it $p$-╕ьъюёЄ№■ ярЁ√
$C_{0}, C_{1}$ юЄэюёшЄхы№эю чрь√ърэш  $G$} эрч√трхЄё  тхышўшэр
$C_p[G, C_{0}, C_{1}] = \inf \int\limits_{R} \vert \nabla u
\vert^{p}\ dm(x),$
уфх Єюўэр  эшцэ   уЁрэ№ схЁ╕Єё  яю тёхь ЇєэъЎш ь $u,$ эхяЁхЁ√тэ√ь т
$R^{\,*},$ $u\in ACL(R),$ Єръшь ўЄю $u=1$ эр $C_{1}$ ш $u=0$ эр
$C_{0}.$ ╙ърчрээ√х ЇєэъЎшш сєфхь эрч√трЄ№ {\it фюяєёЄшь√ьш} фы 
тхышўшэ√ $C_p [G, C_{0}, C_{1}].$ ╠√ сєфхь уютюЁшЄ№, ўЄю  {\it
ьэюцхёЄтю $\sigma \subset {\Bbb R}^n$ Ёрчфхы хЄ $C_{0}$ ш $C_{1}$ т
$R^{\,*}$}, хёыш $\sigma \cap R$ чрьъэєЄю т $R$ ш эрщфєЄё 
эхяхЁхёхър■∙шхё  ьэюцхёЄтр $A$ ш $B,$  ты ■∙шхё  юЄъЁ√Є√ьш т
$R^{\,*} \setminus \sigma,$ Єръшх ўЄю $R^{\,*} \setminus \sigma =
A\cup B,$ $C_{0}\subset A$ ш $C_{1} \subset B.$ ╧єёЄ№ $\Sigma$
юсючэрўрхЄ ъырёё тёхї ьэюцхёЄт, Ёрчфхы ■∙шї $C_{0}$ ш $C_{1}$ т
$R^{\,*}.$ ─ы  ўшёыр $p^{\prime} = p/(p-1)$ юяЁхфхышь тхышўшэє
%
$$\widetilde{M_{p^{\prime}}}(\Sigma)=\inf\limits_{\rho\in
\widetilde{\rm adm} \Sigma} \int\limits_{{\Bbb
R}^n}\rho^{\,p^{\prime}}dm(x)\,,$$
%
уфх чряшё№ $\rho\in \widetilde{\rm adm}\,\Sigma$ ючэрўрхЄ, ўЄю
$\rho$ -- эхюЄЁшЎрЄхы№эр  сюЁхыхтёър  ЇєэъЎш  т ${\Bbb R}^n$ Єрър ,
ўЄю
%
$$\int\limits_{\sigma \cap R}\rho d{\mathcal H}^{n-1} \geqslant
1\quad\forall\, \sigma \in \Sigma\,.$$
%
╟рьхЄшь, ўЄю ёюуырёэю Ёхчєы№ЄрЄр ╓шьхЁр
\begin{equation}\label{eq3}
\widetilde{M_{p^{\,\prime}}}(\Sigma)=C_p[G , C_{0} ,
C_{1}]^{\,-1/(p-1)}\,,
\end{equation}
ёь. \cite[ЄхюЁхьр~3.13]{Zi} яЁш $p=n$ ш \cite[ё.~50]{Zi$_1$} яЁш
$1<p<\infty.$ ╟рьхЄшь Єръцх, ўЄю ёюуырёэю Ёхчєы№ЄрЄр ╪ы√ър
\begin{equation}\label{eq4}
M_p(\Gamma(E, F, D))= C_p[D, E, F]\,,
\end{equation}
ёь. \cite[ЄхюЁхьр~1]{Shl}.

\medskip
═ряюьэшь, ўЄю юЄюсЁрцхэшх $f:X\rightarrow Y$ ьхцфє яЁюёЄЁрэёЄтрьш ё
ьхЁрьш $(X, \Sigma, \mu)$ ш $(Y, \Sigma^{\,\prime}, \mu^{\,\prime})$
юсырфрхЄ {\it $N$-ётющ\-ё\-Є\-тюь} (╦єчшэр), хёыш шч єёыютш 
$\mu(S)=0$ ёыхфєхЄ, ўЄю $\mu^{\,\prime}(f(S))=0.$ ╤ыхфє■∙хх
тёяюьюурЄхы№эюх єЄтхЁцфхэшх яюыєўхэю т ЁрсюЄх \cite{KRSS} (ёь.
ЄхюЁхьр 1 ш ёыхфёЄтшх 2).

\medskip
\begin{proposition}\label{pr1}
{\, ╧єёЄ№ $D$ -- юсырёЄ№ т ${\Bbb R}^n,$ $n\geqslant 3,$
$\varphi:(0,\infty)\rightarrow (0,\infty)$ -- эхєс√тр■∙р  ЇєэъЎш ,
єфютыхЄтюЁ ■∙р  єёыютш■ (\ref{eqOS3.0a}). ╥юуфр:

1) ┼ёыш $f:D\rightarrow{\Bbb R}^n$ -- эхяЁхЁ√тэюх юЄъЁ√Єюх
юЄюсЁрцхэшх ъырёёр $W^{1,\varphi}_{loc}(D),$ Єю $f$ шьххЄ яюўЄш
тё■фє яюыэ√щ фшЇЇхЁхэЎшры т $D;$

2) ╦■сюх эхяЁхЁ√тэюх юЄюсЁрцхэшх $f\in W^{1,\varphi}_{loc}$ юсырфрхЄ
$N$-ётющёЄтюь юЄэюёшЄхы№эю $(n-1)$-ьхЁэющ ьхЁ√ ╒рєёфюЁЇр, сюыхх
Єюую, ыюъры№эю рсёюы■Єэю эхяЁхЁ√тэю эр яюўЄш тёхї ёЇхЁрї $S(x_0, r)$
ё ЎхэЄЁюь т чрфрээющ яЁхфяшёрээющ Єюўъх $x_0\in{\Bbb R}^n$. ╩Ёюьх
Єюую, эр яюўЄш тёхї Єръшї ёЇхЁрї $S(x_0, r)$ т√яюыэхэю єёыютшх
${\mathcal H}^{n-1}(f(E))=0,$ ъръ Єюы№ъю $|\nabla f|=0$ эр ьэюцхёЄтх
$E\subset S(x_0, r).$ (╟фхё№ <<яюўЄш тёхї>> яюэшьрхЄё  юЄэюёшЄхы№эю
ышэхщэющ ьхЁ√ ╦хсхур яю ярЁрьхЄЁє $r$).}

\end{proposition}

\medskip
─ы  юЄюсЁрцхэш  $f:D\,\rightarrow\,{\Bbb R}^n,$ ьэюцхёЄтр $E\subset
D$ ш $y\,\in\,{\Bbb R}^n,$  юяЁхфхышь {\it ЇєэъЎш■ ъЁрЄэюёЄш $N(y,
f, E)$} ъръ ўшёыю яЁююсЁрчют Єюўъш $y$ тю ьэюцхёЄтх $E,$ Є.х.
\begin{equation}\label{eq1.7A}
N(y, f, E)\,=\,{\rm card}\,\left\{x\in E: f(x)=y\right\}\,,\quad
%
N(f, E)\,=\,\sup\limits_{y\in{\Bbb R}^n}\,N(y, f, E)\,.
\end{equation}
╬сючэрўшь ўхЁхч $J_{n-1}f(a)$ тхышўшэє, ючэрўр■∙є■ $(n-1)$-ьхЁэ√щ
 ъюсшрэ юЄюсЁрцхэш  $f$ т Єюўъх $a$ (ёь. \cite[Ёрчфхы~3.2.1]{Fe}).
╧Ёхфяюыюцшь, ўЄю юЄюсЁрцхэшх $f:D\rightarrow {\Bbb R}^n$
фшЇЇхЁхэЎшЁєхью т Єюўъх $x_0\in D$ ш ьрЄЁшЎр ▀ъюсш
$f^{\,\prime}(x_0)$ эхт√Ёюцфхэр, $J(x_0, f)={\rm
det\,}f^{\,\prime}(x_0)\ne 0.$ ╥юуфр эрщфєЄё  ёшёЄхь√ тхъЄюЁют
$e_1,\ldots, e_n$ ш $\widetilde{e_1},\ldots,\widetilde{e_n}$ ш
яюыюцшЄхы№э√х ўшёыр $\lambda_1(x_0),\ldots,\lambda_n(x_0),$
$\lambda_1(x_0)\leqslant\ldots\leqslant\lambda_n(x_0),$ Єръшх ўЄю
$f^{\,\prime}(x_0)e_i=\lambda_i(x_0)\widetilde{e_i}$ (ёь.
\cite[ЄхюЁхьр~2.1 уы. I]{Re}), яЁш ¤Єюь,
\begin{equation}\label{eq11C}
|J(x_0, f)|=\lambda_1(x_0)\ldots\lambda_n(x_0),\quad \Vert
f^{\,\prime}(x_0)\Vert =\lambda_n(x_0)\,, \quad
l(f^{\,\prime}(x))=\lambda_1(x_0)\,,\end{equation}
\begin{equation}\label{eq41}
K_{I, p}(x_0,
f)=\frac{\lambda_1(x_0)\cdots\lambda_n(x_0)}{\lambda^p_1(x_0)}\,,
\end{equation}
ёь. \cite[ёююЄэю°хэшх~(2.5), Ёрчф.~2.1, уы.~I]{Re}. ╫шёыр
$\lambda_1(x_0),\ldots\lambda_n(x_0)$ эрч√тр■Єё  {\it уыртэ√ьш
чэрўхэш ьш}, р тхъЄюЁр $e_1,\ldots, e_n$ ш
$\widetilde{e_1},\ldots,\widetilde{e_n}$ -- {\it уыртэ√ьш тхъЄюЁрьш
} юЄюсЁрцхэш  $f^{\,\prime}(x_0).$ ╚ч ухюьхЄЁшўхёъюую ёь√ёыр
$(n-1)$-ьхЁэюую  ъюсшрэр, р Єръцх яхЁтюую ёююЄэю°хэш  т
(\ref{eq11C}) т√ЄхърхЄ, ўЄю
\begin{equation}\label{eq10C}
\lambda_1(x_0)\cdots\lambda_{n-1}(x_0)\leqslant
J_{n-1}f(x_0)\leqslant \lambda_2(x_0)\cdots\lambda_n(x_0)\,,
\end{equation}
т ўрёЄэюёЄш, шч (\ref{eq10C}) ёыхфєхЄ, ўЄю $J_{n-1}f(x_0)$
яюыюцшЄхыхэ тю тёхї Єхї Єюўърї $x_0,$ уфх яюыюцшЄхыхэ  ъюсшрэ
$J(x_0, f).$

\medskip
╤ыхфє■∙шх фтх ыхьь√ эхёєЄ т ёхсх юёэютэє■ ёь√ёыютє■ эруЁєчъє фрээющ
чрьхЄъш. ╧хЁтюх шч эшї тяхЁт√х єёЄрэютыхэю фы  ёыєўр  уюьхюьюЁЇшчьют
т ЁрсюЄх \cite{KR$_1$} (ёь. ЄхюЁхьє 2.1).

\medskip
\begin{lemma}{}\label{thOS4.1} { ╧єёЄ№ $D$ -- юсырёЄ№ т ${\Bbb R}^n,$
$n\geqslant 3,$ $\varphi:(0,\infty)\rightarrow (0,\infty)$ --
эхєс√тр■∙р  ЇєэъЎш , єфютыхЄтюЁ ■∙р  єёыютш■ (\ref{eqOS3.0a}).
┼ёыш $n\geqslant 3$ ш $p>n-1,$ Єю ърцфюх юЄъЁ√Єюх фшёъЁхЄэюх
юЄюсЁрцхэшх $f:D\rightarrow {\Bbb R}^n$ ё ъюэхўэ√ь шёърцхэшхь ъырёёр
$W^{1,\varphi}_{loc}$ Єръюх, ўЄю $N(f, D)<\infty,$  ты хЄё  эшцэшь
$Q$-юЄюсЁрцхэшхь юЄэюёшЄхы№эю $p$-ьюфєы  т ърцфющ Єюўъх
$x_0\in\overline{D}$ яЁш
$$Q(x)=N(f, D)\cdot K^{\frac{p-n+1}{n-1}}_{I, \alpha}(x, f),$$
$\alpha:=\frac{p}{p-n+1},$ уфх тэєЄЁхээ   фшырЄрЎш  $K_{I,\alpha}(x,
f)$ юЄюсЁрцхэш  $f$ т Єюўъх $x$ яюЁ фър $\alpha$ юяЁхфхыхэр
ёююЄэю°хэшхь (\ref{eq0.1.1A}), р ъЁрЄэюёЄ№ $N(f, D)$ юяЁхфхыхэр
тЄюЁ√ь ёююЄэю°хэшхь т (\ref{eq1.7A}).}
\end{lemma}

\medskip
\begin{proof}
╟рьхЄшь, ўЄю $f$ фшЇЇхЁхэЎшЁєхью яюўЄш тё■фє ттшфє яЁхфыюцхэш 
\ref{pr1}. ╧єёЄ№ $B$ -- сюЁхыхтю ьэюцхёЄтю тёхї Єюўхъ $x\in D,$ т
ъюЄюЁ√ї $f$ шьххЄ яюыэ√щ фшЇЇхЁхэЎшры $f^{\,\prime}(x)$ ш $J(x,
f)\ne 0.$ ╧Ёшьхэ   ЄхюЁхьє ╩шЁёсЁрєэр ш ётющёЄтю хфшэёЄтхээюёЄш
ряяЁюъёшьрЄштэюую фшЇЇхЁхэЎшрыр (ёь. \cite[яєэъЄ√~2.10.43 ш
3.1.2]{Fe}), ь√ тшфшь, ўЄю ьэюцхёЄтю $B$ яЁхфёЄрты хЄ ёюсющ эх сюыхх
ўхь ёў╕Єэюх юс·хфшэхэшх сюЁхыхтёъшї ьэюцхёЄт $B_l,$
$l=1,2,\ldots\,,$ Єръшї, ўЄю ёєцхэш  $f_l=f|_{B_l}$  ты ■Єё 
сшышя°шЎхт√ьш уюьхюьюЁЇшчьрьш (ёь., эряЁ., \cite[яєэъЄ√~3.2.2, 3.1.4
ш 3.1.8]{Fe}). ┴хч юуЁрэшўхэш  юс∙эюёЄш, ь√ ьюцхь яюырурЄ№, ўЄю
ьэюцхёЄтр $B_l$ яюярЁэю эх яхЁхёхър■Єё . ╬сючэрўшь Єръцх ёшьтюыюь
$B_*$ ьэюцхёЄтю тёхї Єюўхъ $x\in D,$ т ъюЄюЁ√ї $f$ шьххЄ яюыэ√щ
фшЇЇхЁхэЎшры, юфэръю, $f^{\,\prime}(x)=0.$

\medskip
┬тшфє яюёЄЁюхэш , ьэюцхёЄтю $B_0:=D\setminus \left(B\bigcup
B_*\right)$ шьххЄ ыхсхуютє ьхЁє эєы№. ╤ыхфютрЄхы№эю, яю
\cite[ЄхюЁхьр~9.1]{MRSY}, ${\mathcal H}^{n-1}(B_0\cap S_r)=0$ фы 
$p$-яюўЄш тёхї ёЇхЁ $S_r:=S(x_0,r)$ ё ЎхэЄЁюь т Єюўъх
$x_0\in\overline{D},$ уфх <<$p$-яюўЄш тёхї>> ёыхфєхЄ яюэшьрЄ№ т
ёь√ёых $p$-ьюфєы  ёхьхщёЄт яютхЁїэюёЄхщ. ╧ю ыхььх \ref{lemma8.2.11}
Єръцх ${\mathcal H}^{n-1}(B_0\cap S_r)=0$ яЁш яюўЄш тёхї $r\in {\Bbb
R}.$

\medskip
╧ю яЁхфыюцхэш■ \ref{pr1} ш шч єёыютш  ${\mathcal H}^{n-1}(B_0\cap
S_r)=0$ фы  яюўЄш тёхї $r\in {\Bbb R}$ т√ЄхърхЄ, ўЄю ${\mathcal
H}^{n-1}(f(B_0\cap S_r))=0$ фы  яюўЄш тёхї $r\in {\Bbb R}.$ ╧ю ¤Єюьє
яЁхфыюцхэш■ Єръцх ${\mathcal H}^{n-1}(f(B_*\cap S_r))=0,$ яюёъюы№ъє
$f$ -- юЄюсЁрцхэшх ё ъюэхўэ√ь шёърцхэшхь ш, чэрўшЄ, $\nabla f=0$
яюўЄш тё■фє, уфх $J(x, f)=0.$

\medskip
╧єёЄ№ $\Gamma$ -- ёхьхщёЄтю тёхї яхЁхёхўхэшщ ёЇхЁ $S_r,$
$r\in(\varepsilon, r_0),$ $r_0<d_0=\sup\limits_{x\in D}\,|x-x_0|,$ ё
юсырёЄ№■ $D$ (чфхё№ $\varepsilon$ -- яЁюшчтюы№эюх ЇшъёшЁютрээюх
ўшёыю шч шэЄхЁтрыр $(0, r_0)$). ─ы  чрфрээющ ЇєэъЎшш $\rho_*\in{\rm
adm}\,f(\Gamma),$ $\rho_*\equiv0$ тэх $f(D),$ яюырурхь $\rho\equiv
0$ тэх $B,$
$$\rho(x)\ \colon=\ \rho_*(f(x))\left(\frac{|J(x, f)|}{l(f^{\,\prime}(x))}
\right)^{\frac{1}{n-1}} \qquad\text{яЁш}\ x\in B\,.$$
╙ўшЄ√тр  ёююЄэю°хэш  (\ref{eq11C}) ш (\ref{eq10C}),
\begin{equation}\label{eq12C}
\frac{|J(x, f)|}{l(f^{\,\prime}(x))} \geqslant J_{n-1}f(x)\,.
\end{equation}
╧єёЄ№ $D_{r}^{\,*}\in f(\Gamma),$ $D_{r}^{\,*}=f(D\cap S_r).$
╟рьхЄшь, ўЄю
$D_{r}^{\,*}=\bigcup\limits_{i=0}^{\infty} f(S_r\cap B_i)\bigcup
f(S_r\cap B_*)$
ш, ёыхфютрЄхы№эю, фы  яюўЄш тёхї $r\in (\varepsilon, r_0)$
\begin{equation}\label{eq10B}
1\leqslant \int\limits_{D^{\,*}_r}\rho^{n-1}_*(y)d{\mathcal A_*}
\leqslant \sum\limits_{i=0}^{\infty} \int \limits_{f(S_r\cap B_i)}
\rho^{n-1}_*(y)N (y, f, S_r\cap B_i)d{\mathcal H}^{n-1}y +
\end{equation}
$$+\int\limits_{f(S_r\cap B_*)} \rho^{n-1}_*(y)N (y, f, S_r\cap B_*
)d{\mathcal H}^{n-1}y\,.$$ ╙ўшЄ√тр  фюърчрээюх т√°х, шч
(\ref{eq10B}) ь√ яюыєўрхь, ўЄю
\begin{equation}\label{eq11B}
1\leqslant \int\limits_{D^{\,*}_r}\rho^{n-1}_*(y)d{\mathcal A_*}
\leqslant \sum\limits_{i=1}^{\infty} \int \limits_{f(S_r\cap B_i)}
\rho^{n-1}_*(y)N (y, f, S_r\cap B_i)d{\mathcal H}^{n-1}y
\end{equation}
фы  яюўЄш тёхї $r\in (\varepsilon, r_0).$
╨рёёєцфр  яюъєёюўэю эр $B_i,$ $i=1,2,\ldots,$ ттшфє \cite[1.7.6 ш
ЄхюЁхьр~3.2.5]{Fe} ш (\ref{eq12C}) ь√ яюыєўрхь, ўЄю
$$\int\limits_{B_i\cap S_r}\rho^{n-1}\,d{\mathcal A}=
\int\limits_{B_i\cap S_r}\rho_*^{n-1}(f(x))\frac{|J(x,
f)|}{l(f^{\,\prime}(x))}\,d{\mathcal A}=$$
$$=\int\limits_{B_i\cap S_r}\rho_*^{n-1}(f(x))\cdot \frac{|J(x,
f)|}{l(f^{\,\prime}(x))J_{n-1}f(x)}\cdot J_{n-1}f(x)\,d{\mathcal
A}\geqslant $$
\begin{equation}\label{eq12B}
\geqslant\int\limits_{B_i\cap S_r}\rho_*^{n-1}(f(x))\cdot
J_{n-1}f(x)\,d{\mathcal A}=\int\limits_{f(B_i\cap
S_r)}\rho_{*}^{n-1}\,N(y, f, S_r\cap B_i)d{\mathcal H}^{n-1}y
\end{equation} фы  яюўЄш тёхї $r\in (\varepsilon, r_0).$
╚ч (\ref{eq11B}) ш (\ref{eq12B}) т√ЄхърхЄ, ўЄю
$\rho\in{\rm{ext\,adm}}\,\Gamma.$

╟рьхэр яхЁхьхээ√ї эр ърцфюь $B_l,$ $l=1,2,\ldots\,,$ (ёь., эряЁ.,
\cite[ЄхюЁхьр~3.2.5]{Fe}) ш ётющёЄтю ёў╕Єэющ рффшЄштэюёЄш шэЄхуЁрыр
яЁштюф Є ъ юЎхэъх
$$\int\limits_{D}\frac{\rho^p(x)}{K^{\frac{p-n+1}{n-1}}_{I,
\alpha}(x, f)}\,dm(x)\leqslant \int\limits_{f(D)}N(f, D)\cdot
\rho^{\,p}_*(y)\, dm(y)\,,$$ $\alpha:=\frac{p}{p-n+1},$ ўЄю ш
чртхЁ°рхЄ фюърчрЄхы№ёЄтю.
\end{proof}

\medskip
\begin{remark}\label{rem1}
╟ръы■ўхэшх ыхьь√ \ref{thOS4.1} яЁш $n=2$ юёЄр╕Єё  ёяЁртхфышт√ь фы 
ъырёёют ╤юсюыхтр $W_{loc}^{1, 1}$ яЁш рэрыюушўэ√ї єёыютш ї, чр
шёъы■ўхэшхь фюяюыэшЄхы№эюую єёыютш  ╩ры№фхЁюэр (\ref{eqOS3.0a}).
╫Єюс√ т ¤Єюь єсхфшЄ№ё , эхюсїюфшью яютЄюЁшЄ№ фюърчрЄхы№ёЄтю ¤Єющ
ыхьь√ яЁш $n=2,$ уфх эхюсїюфшью єўхёЄ№ эрышўшх $N$-ётющёЄтр
єърчрээ√ї юЄюсЁрцхэшщ эр яюўЄш тёхї юъЁєцэюёЄ ї, ўЄю юсхёяхўштрхЄё 
ётющёЄтюь $ACL$ фы  яЁюшчтюы№э√ї ъырёёют ╤юсюыхтр (ёь.
\cite[ЄхюЁхьр~1, я.~1.1.3, $\S$~1.1, уы.~I]{Maz}).
\end{remark}

\medskip
╚ьххЄ ьхёЄю ёыхфє■∙хх єЄтхЁцфхэшх (ёь. \cite[ыхььр~3.11]{MRV$_2$} ш
\cite[ыхььр~2.6, уы.~III]{Ri} яЁш $p=n$ ш \cite[ыхььр~1]{Sev} яЁш
$p\ne n$).

\medskip
\begin{proposition}\label{pr3*!} {\, ╧єёЄ№ $n-1<p\leqslant n,$ $D$ -- юуЁрэшўхээр 
юсырёЄ№ т ${\Bbb R}^n,$ Єюуфр фы  ърцфюую $a>0$ ёє∙хёЄтєхЄ
яюыюцшЄхы№эюх ўшёыю $\delta
> 0$ Єръюх, ўЄю
${\rm cap}_p\,\left(D,\,C\right)\geqslant \delta,$
%
уфх $C$ -- яЁюшчтюы№э√щ ъюэЄшэєєь т $D$ Єръющ ўЄю $d(C)\geqslant
a.$}
\end{proposition}

\medskip
└эрыюу ёыхфє■∙хщ ыхьь√ т ёыєўрх уюьхюьюЁЇшчьют фюърчрэ т ьюэюуЁрЇшш
\cite[ЄхюЁхьр~9.3]{MRSY} (ёь. Єръцх ЁрсюЄє \cite[ЄхюЁхьр~4.1]{KR}).

\medskip
\begin{lemma}\label{lemma1}
{\, ╧єёЄ№ $n\geqslant 2,$ $D$ -- юуЁрэшўхээр  юсырёЄ№ т ${\Bbb
R}^n,$  $n\geqslant p>n-1,$ $x_0\in D$ ш $Q:D\rightarrow (0,
\infty)$ - шчьхЁшьр  яю ╦хсхує ЇєэъЎш  Єрър , ўЄю яЁш эхъюЄюЁюь
$\varepsilon_0>0,$ $\varepsilon_0<{\rm dist}(x_0,
\partial D),$ т√яюыэхэю єёыютшх
\begin{equation}\label{eq9A}
\int\limits_{0}^{\varepsilon_0}
\frac{dt}{t^{\frac{n-1}{\alpha-1}}\widetilde{q}_{x_0}^{\,\frac{1}{\alpha-1}}(t)}=\infty\,,
\end{equation}
уфх $\alpha=\frac{p}{p-n+1},$
$\widetilde{q}_{x_0}(r):=\frac{1}{\omega_{n-1}r^{n-1}}\int\limits_{|x-x_0|=r}Q^{\frac{n-1}{p-n+1}}(x)\,d{\mathcal
H}^{n-1}$ юсючэрўрхЄ ёЁхфэхх шэЄхуЁры№эюх чэрўхэшх ЇєэъЎшш
$Q^{\frac{n-1}{p-n+1}}(x)$ эрф ёЇхЁющ $S(x_0, r).$ ╥юуфр ърцфюх
юуЁрэшўхээюх юЄъЁ√Єюх, фшёъЁхЄэюх ш чрьъэєЄюх т юсырёЄш
$D\setminus\{x_0\}$ эшцэхх $Q$-юЄюсЁрцхэшх
$f:D\setminus\{x_0\}\rightarrow {\Bbb R}^n$ юЄэюёшЄхы№эю $p$-ьюфєы 
яЁюфюыцрхЄё  т Єюўъє $x_0$ эхяЁхЁ√тэ√ь юсЁрчюь фю юЄюсЁрцхэш 
$f:D\rightarrow {\Bbb R}^n,$ хёыш $C(f, x_0)\cap C(f,
\partial D)=\varnothing.$}
\end{lemma}

\medskip
\begin{proof}
═х юуЁрэшўштр  юс∙эюёЄш Ёрёёєцфхэшщ, ьюцэю ёўшЄрЄ№, ўЄю $x_0=0$ ш
$\overline{f(D\setminus\{0\})}\subset {\Bbb B}^n.$ ╧Ёхфяюыюцшь
яЁюЄштэюх, р шьхээю, ўЄю юЄюсЁрцхэшх $f$ эх ьюцхЄ с√Є№ яЁюфюыцхэю яю
эхяЁхЁ√тэюёЄш т Єюўъє $x_0=0.$ ╥юуфр эрщфєЄё  фтх яюёыхфютрЄхы№эюёЄш
$x_j$ ш $x_j^{\,\prime},$ яЁшэрфыхцр∙шх
$D\setminus\left\{0\right\},$ $x_j\rightarrow 0,\quad
x_j^{\,\prime}\rightarrow 0,$ Єръшх, ўЄю
$|f(x_j)-f(x_j^{\,\prime})|\geqslant a>0$ фы  тёхї $j\in {\Bbb N}.$
╠юцэю ёўшЄрЄ№, ўЄю $x_j$ ш $x_j^{\,\prime}$ ыхцрЄ тэєЄЁш °рЁр $B(0,
r_0),$ $r_0:={\rm dist\,}(0, \partial D).$ ╧юырурхь
$r_j=\max{\left\{|x_j|,\,|x_j^{\,\prime}|\right\}},
l_j=\min{\left\{|x_j|,\,|x_j^{\,\prime}|\right\}}.$
╤юхфшэшь Єюўъш $x_j$ ш $x_j^{\,\prime}$ чрьъэєЄющ ъЁштющ, ыхцр∙хщ т
$\overline{B(0, r_j)}\setminus\left\{0\right\}.$ ╬сючэрўшь ¤Єє
ъЁштє■ ёшьтюыюь $C_j$ ш ЁрёёьюЄЁшь ъюэфхэёрЄюЁ
$E_j=\left(D\setminus\left\{0\right\}\,,C_j\right)$ (эх юуЁрэшўштр 
юс∙эюёЄш, ьюцэю ёўшЄрЄ№, ўЄю тёх Єюўъш $x\in C_j$ єфютыхЄтюЁ ■Є
эхЁртхэёЄтє $|x|\geqslant l_j$). ┬ ёшыє юЄъЁ√ЄюёЄш ш эхяЁхЁ√тэюёЄш
юЄюсЁрцхэш  $f,$ ярЁр $f(E_j)$ Єръцх  ты хЄё  ъюэфхэёрЄюЁюь.
╧юёъюы№ъє $f$ -- юЄъЁ√Єюх, фшёъЁхЄэюх ш чрьъэєЄюх юЄюсЁрцхэшх,
$\partial f(D\setminus\{0\})= C(f, \partial D)\cup C(f, 0).$

\medskip
╨рёёьюЄЁшь яЁш $r_j<r<r_0$ яЁюъюыюЄ√щ °рЁ $G_1:=B(0,
r)\setminus\{0\}.$ ╟рьхЄшь, ўЄю $C_j$ -- ъюьяръЄэюх яюфьэюцхёЄтю
$G_1,$ Єюуфр $f(C_j)$ -- ъюьяръЄэюх яюфьэюцхёЄтю $f(G_1).$

\medskip
┬тшфє юЄъЁ√ЄюёЄш $f$ шьххЄ ьхёЄю тъы■ўхэшх $\partial f(G_1)\subset
C(f, 0)\cup f(S(0, r)).$ ─хщёЄтшЄхы№эю, хёыш $y_0\in \partial
f(G_1),$ Єю фы  эхъюЄюЁющ яюёыхфютрЄхы№эюёЄш $y_k\in f(G_1)$ шьххь:
$y_k\rightarrow y_0.$ ╥юуфр $y_k=f(x_k),$ $x_k\in G_1.$ ╧юёъюы№ъє
$G_1$ юуЁрэшўхэю, Єю ьюцэю ёўшЄрЄ№, ўЄю $x_k\rightarrow x_0\in
\overline{G_1}.$ ╬ёЄрыюё№ чрьхЄшЄ№, ўЄю ёыєўрщ, ъюуфр $x_0$ --
тэєЄЁхээ   Єюўър $G_1$ эхтючьюцхэ, яюёъюы№ъє т ¤Єюь ёыєўрх
$f(x_k)\rightarrow f(x_0),$ уфх $f(x_0)$ -- тэєЄЁхээ   Єюўър
$f(G_1),$ ўЄю яЁюЄштюЁхўшЄ т√сюЁє $y_k.$ ╥юуфр $x_0\in \partial
G_1=\{0\}\cup S(0, r),$ ўЄю ш фюърч√трхЄ тъы■ўхэшх $\partial
f(G_1)\subset C(f, 0)\cup f(S(0, r)).$ ╥юуфр ттшфє чрьъэєЄюёЄш ш
юЄъЁ√ЄюёЄш юЄюсЁрцхэш  $f$ ьэюцхёЄтю $\partial f(G_1)\setminus C(f,
0)$  ты хЄё  чрьъэєЄ√ь т ${\Bbb R}^n.$

\medskip
╬Єё■фр т√ЄхърхЄ, ўЄю ьэюцхёЄтю $\sigma:=\partial f(G_1)\setminus
C(f, 0)$ юЄфхы хЄ $f(C_j)$ юЄ $C(f, \partial D)$ т
$f(D\setminus\{0\})\cup C(f, \partial D).$ ─хщёЄтшЄхы№эю,
$$f(D\setminus\{0\})\cup C(f, \partial D)=f(G_1)\cup\sigma\cup \left((f(D\setminus\{0\})\cup C(f, \partial D))\setminus
\overline{f(G_1)}\right)\,,$$
ърцфюх шч ьэюцхёЄт $A:=f(G_1)$ ш $B:=(f(D\setminus\{0\})\cup C(f,
\partial D))\setminus \overline{f(G_1)}$ юЄъЁ√Єю т Єюяюыюушш
яЁюёЄЁрэёЄтр $f(D\setminus\{0\})\cup C(f, \partial D),$ $A\cap
B=\varnothing,$ $C_0:=f(C_j)\subset A$ ш $C_1:=C(f, \partial
D)\subset B.$

\medskip
╧юырурхь $\alpha:=\frac{p}{p-n+1}.$ ╧юёъюы№ъє $\sigma\subset f(S(0,
r)),$ ттшфє (\ref{eq3}) ш (\ref{eq4})
\begin{equation}\label{eq5C}
M_{\alpha}(\Gamma(f(C_j), C(f, \partial D), f(D\setminus
\{0\})))\leqslant
\frac{1}{M_p^{\frac{n-1}{p-n+1}}(f(\Sigma_{r}))}\,,
\end{equation}
уфх $\Sigma_{r}$ -- ёхьхщёЄтю ёЇхЁ $S(0, r),$ $r\in (r_j, r_0).$
╤ фЁєующ ёЄюЁюэ√, шч ыхьь√ \ref{lemma4} ш єёыютш  ЁрёїюфшьюёЄш
шэЄхуЁрыр (\ref{eq9A}) т√ЄхърхЄ, ўЄю
$M_p^{\frac{n-1}{p-n+1}}(f(\Sigma_{r}))\rightarrow\infty$ яЁш
$j\rightarrow \infty.$ ┬ Єръюь ёыєўрх, шч (\ref{eq5C}) ёыхфєхЄ, ўЄю
яЁш $j\rightarrow \infty$
\begin{equation}\label{eq6C}
M_{\alpha}(\Gamma(C(f, D), f(C_j), f(D\setminus\{0\})))\rightarrow
0\,.
\end{equation}
└эрыюушўэє■ яЁюЎхфєЁє яЁюфхырхь юЄэюёшЄхы№эю яЁхфхы№эюую ьэюцхёЄтр
$C(f, 0).$ ╚ьхээю, чрьхЄшь, ўЄю $C_j$ -- ъюьяръЄ т $G_2:=D\setminus
\overline{B(0, \varepsilon)}$ фы  яЁюшчтюы№эюую $\varepsilon\in (0,
l_j).$ ╥юуфр ттшфє эхяЁхЁ√тэюёЄш $f$ ьэюцхёЄтю $f(C_j)$  ты хЄё 
ъюьяръЄэ√ь яюфьэюцхёЄтюь $f(G_2)$ ш, т ўрёЄэюёЄш, $\partial
f(G_2)\cap f(C_j)=\varnothing.$ ─рыхх, чрьхЄшь, ўЄю
\begin{equation}\label{eq13A}
\partial
f(G_2)\subset C(f,
\partial D)\cup f(S(0, \varepsilon))\,.
\end{equation}
╧юырурхь $\theta:=\partial f(G_2)\setminus C(f, \partial D)$ ш
чрьхЄшь, ўЄю $\theta$  ты хЄё  чрьъэєЄ√ь, яюёъюы№ъє шьххЄ ьхёЄю
ёююЄэю°хэшх (\ref{eq13A}) ш, ъЁюьх Єюую, $C(f,
\partial D)\cap f(S(0, \varepsilon))=\varnothing$ ттшфє чрьъэєЄюёЄш
юЄюсЁрцхэш  $f$ т $D\setminus\{0\}.$ ╩Ёюьх Єюую, чрьхЄшь, ўЄю
$\theta$ юЄфхы хЄ $C_3:=f(C_j)$ ш $C_4:=C(f, 0)$ т
$f(D\setminus\{0\})\cup C(f, 0).$ ─хщёЄтшЄхы№эю,
$$f(D\setminus\{0\})\cup C(f, 0)= f(G_2)\cup \theta\cup
\left((f(D\setminus\{0\})\cup C(f,
0))\setminus\overline{f(G_2)}\right)\,,$$
$A=f(G_2)$ ш $B=\left((f(D\setminus\{0\})\cup C(f,
0))\setminus\overline{f(G_2)}\right)$ юЄъЁ√Є√ т Єюяюыюушш
яЁюёЄЁрэёЄтр $f(D\setminus\{0\})\cup C(f, 0),$  $A\cap
B=\varnothing,$ $C_3:=f(C_j)\subset A$ ш $C_4:=C(f, 0)\subset B.$

\medskip
╩ръ ш яЁхцфх, яюырурхь $\alpha:=\frac{p}{p-n+1}.$  ╥ръ ъръ
$\theta\subset f(S(0, \varepsilon)),$ ттшфє (\ref{eq3}) ш
(\ref{eq4}) яюыєўрхь:
\begin{equation}\label{eq7C}
M_{\alpha}(\Gamma(f(C_j), C(f, 0), f(D\setminus \{0\})))\leqslant
\frac{1}{M_p^{\frac{n-1}{p-n+1}}(f(\Theta_{\varepsilon}))}\,,
\end{equation}
уфх $\Theta_{\varepsilon}$ -- ёхьхщёЄтю ёЇхЁ $S(0, \varepsilon),$
$\varepsilon\in (0, l_j).$
╤ фЁєующ ёЄюЁюэ√, шч ыхьь√ \ref{lemma4} ш єёыютш  ЁрёїюфшьюёЄш
шэЄхуЁрыр (\ref{eq9A}) т√ЄхърхЄ, ўЄю
$M_p^{\frac{n-1}{p-n+1}}(f(\Theta_{\varepsilon}))=\infty.$ ┬ Єръюь
ёыєўрх, шч (\ref{eq7C}) ёыхфєхЄ, ўЄю
\begin{equation}\label{eq8C}
M_{\alpha}(\Gamma(C(f, 0), f(C_j), f(D\setminus\{0\})))=0\,.
\end{equation}
╟рьхЄшь, ўЄю ттшфє яЁхфыюцхэш  \ref{pr1*!} ш яюыєрффшЄштэюёЄш ьюфєы 
ёьхщёЄт ъЁшт√ї (ёь. \cite[Ёрчф.~6, уы.~I]{Va}), яЁш $j\rightarrow
\infty$ шч (\ref{eq6C}) ш (\ref{eq8C}) т√ЄхърхЄ, ўЄю
$${\rm cap}_{\alpha}\,f(E_j)\leqslant$$
\begin{equation}\label{eq9C}
\leqslant M_{\alpha}(\Gamma(C(f, 0), f(C_j), f(D\setminus\{0\})))+
M_{\alpha}(\Gamma(C(f,
\partial D), f(C_j), f(D\setminus\{0\})))\rightarrow 0\,.
\end{equation}
╤ фЁєующ ёЄюЁюэ√, чрьхЄшь, ўЄю яЁш ёфхырээ√ї юуЁрэшўхэш ї эр $p,$
тхышўшэр $\alpha$ Єръцх єфютыхЄтюЁ хЄ єёыютш■ $\alpha>n-1.$ ┬ Єръюь
ёыєўрх, яю яЁхфыюцхэш■ \ref{pr3*!} ${\rm
cap}_{\alpha}f(E_j)\geqslant \delta>0$ яЁш тёхї эрЄєЁры№э√ї $j,$ ўЄю
яЁюЄштюЁхўшЄ (\ref{eq9C}). ╦хььр фюърчрэр.
 \end{proof}

 \medskip
{\it ─юърчрЄхы№ёЄтю ЄхюЁхь√ \ref{th1}.} ╧ю ыхььх \ref{thOS4.1}
юЄюсЁрцхэшх $f$ т ърцфющ Єюўъх $x_0\in D$  ты хЄё  эшцэшь
$Q$-юЄюсЁрцхэшхь юЄэюёшЄхы№эю $p$-ьюфєы  т ърцфющ Єюўъх
$x_0\in\overline{D}$ яЁш $Q(x)=N(f, D)\cdot
K^{\frac{p-n+1}{n-1}}_{I, \alpha}(x, f),$ $\alpha:=\frac{p}{p-n+1}$
(Є.х., $p=\frac{\alpha(n-1)}{\alpha-1}$), уфх тэєЄЁхээ   фшырЄрЎш 
$K_{I,\alpha}(x, f)$ юЄюсЁрцхэш  $f$ т Єюўъх $x$ яюЁ фър $\alpha$
юяЁхфхыхэр ёююЄэю°хэшхь (\ref{eq0.1.1A}), р ъЁрЄэюёЄ№ $N(f, D)$
юяЁхфхыхэр тЄюЁ√ь ёююЄэю°хэшхь т (\ref{eq1.7A}). ╟рьхЄшь, ўЄю,
яюёъюы№ъє $\alpha\in (n-1, n],$ Єю Єръцх $p\in (n-1, n].$ ╥юуфр
эхюсїюфшьюх чръы■ўхэшх т√ЄхърхЄ шч ыхьь√ \ref{lemma1}, р Єръцх Єюую
ЇръЄр, ўЄю ьръёшьры№эр  ъЁрЄэюёЄ№ $N(f, D)$ чрьъэєЄюую юЄъЁ√Єюую
фшёъЁхЄэюую юЄюсЁрцхэш  $f$ ъюэхўэр (ёь., эряЁ.,
\cite[ыхььр~3.3]{MS}). $\Box$

\medskip
╥хяхЁ№ юЄфхы№эю шёёыхфєхь ёыєўрщ $n=2.$ ─ы  ¤Єющ Ўхыш эряюьэшь, ўЄю
юЄюсЁрцхэшх $f:D\rightarrow \overline{{\Bbb R}^n}$ эрч√трхЄё  {\it
ъюы№Ўхт√ь $Q$-юЄюс\-Ёр\-цх\-эшхь т Єюўъх $x_0\,\in\,D$} (ёь.
\cite{MRSY}--\cite{MRSY$_1$}), хёыш ёююЄэю°хэшх
%
$M\left(f\left(\Gamma\left(S_1,\,S_2,\,A\right)\right)\right)\
\leqslant
\int\limits_{A} Q(x)\cdot \eta^n(|x-x_0|)\ dm(x)$ 
т√яюыэхэю фы  ы■сюую ъюы№Ўр $A=A(r_1,r_2, x_0),$\, $0<r_1<r_2<
r_0:={\rm dist\,}(x_0, \partial D),$ ш фы  ърцфющ шчьхЁшьющ ЇєэъЎшш
$\eta : (r_1,r_2)\rightarrow [0,\infty ]\,$ Єръющ, ўЄю
%
%
%
$\int\limits_{r_1}^{r_2}\eta(r)dr\geqslant 1.$ ╬ЄьхЄшь, ўЄю
ъюы№Ўхт√х $Q$-уюьхюьюЁЇшчь√ яЁюфюыцр■Єё  яю эхяЁхЁ√тэюёЄш т
шчюышЁютрээ√х уЁрэшўэ√х Єюўъш, яЁшў╕ь яЁюфюыцхээюх юЄюсЁрцхэшх Єръцх
 ты хЄё  уюьхюьюЁЇшчьюь (ёь. \cite[ыхььр~4 ш ЄхюЁхьр~4]{Lom}).

\medskip
{\it ─юърчрЄхы№ёЄтю ЄхюЁхь√ \ref{th2}.} ╧єёЄ№ $f$ -- юЄюсЁрцхэшх шч
єёыютш  ЄхюЁхь√, Єюуфр, т ўрёЄэюёЄш,  $f\in W_{loc}^{1,1},$ $f$ --
ъюэхўэюую шёърцхэш  т $D\setminus\{z_0\},$ ъЁюьх Єюую, $f$ фшёъЁхЄэю
ш юЄъЁ√Єю. ╥юуфр ёюуырёэю яЁхфёЄртыхэш■ ╤Єюшыютр \cite[я.~5 (III),
уы.~V]{St}, $f=\varphi\circ g,$ уфх $g$ -- эхъюЄюЁ√щ уюьхюьюЁЇшчь, р
$\varphi$ -- рэрышЄшўхёър  ЇєэъЎш . ╟рьхЄшь, ўЄю Єюуфр Єръцх $g\in
W_{loc}^{1,1}$ ш, ъЁюьх Єюую, $g$ шьххЄ ъюэхўэюх шёърцхэшх.

─хщёЄтшЄхы№эю, ьэюцхёЄтю Єюўхъ тхЄтыхэш  $B_{\varphi}\subset
g(D\setminus\{z_0\})$ ЇєэъЎшш $\varphi$ ёюёЄюшЄ Єюы№ъю шч
шчюышЁютрээ√ї Єюўхъ (ёь. \cite[яєэъЄ√ 5 ш 6 (II), уы.~V]{St}).
╤ыхфютрЄхы№эю, $g(z)=\varphi^{-1}\circ f$ ыюъры№эю, тэх ьэюцхёЄтр
$g^{-1}\left(B_{\varphi}\right).$ ▀ёэю, ўЄю ьэюцхёЄтю
$g^{-1}\left(B_{\varphi}\right)$ Єръцх ёюёЄюшЄ шч шчюышЁютрээ√ї
Єюўхъ, ёыхфютрЄхы№эю, $g\in ACL(D\setminus\{z_0\})$ ъръ ъюьяючшЎш 
рэрышЄшўхёъющ ЇєэъЎшш $\varphi^{-1}$ ш юЄюсЁрцхэш  $f\in
W_{loc}^{1,1}(D\setminus\{z_0\}).$

╧юърцхь, ўЄю $g\in W_{loc}^{1,1}(D\setminus\{z_0\}).$ ─ы  ¤Єющ Ўхыш,
яюёъюы№ъє $g\in ACL(D\setminus\{z_0\}),$ эрь фюёЄрЄюўэю яюърчрЄ№,
ўЄю $|\partial g|\in L_{loc}^1 (D\setminus\{z_0\})$ ш
$|\overline{\partial} g|\in L_{loc}^1 (D\setminus\{z_0\})$ (ёь.
\cite[ЄхюЁхь√ 1 ш 2, я.~1.1.3]{Maz}).
 ╧єёЄ№ фрыхх $\mu_f(z)$ ючэрўрхЄ ъюьяыхъёэє■ фшырЄрЎш■ ЇєэъЎшш
$f(z),$ р $\mu_g(z)$ -- ъюьяыхъёэє■ фшырЄрЎш■ $g.$ ╤юуырёэю
\cite[(1), я.~C, уы.~I]{A} фы  яюўЄш тёхї $z\in D\setminus\{z_0\}$
яюыєўрхь:
\begin{equation}\label{eq1}
f_z=\varphi_z(g(z))g_z,\qquad
f_{\overline{z}}=\varphi_z(g(z))g_{\overline{z}}\,,
\end{equation}
$\mu_f(z)=\mu_g(z)=:\mu(z), \quad
K_{\mu_f}(z)=K_{\mu_g}(z):=K_{\mu}(z)=\frac{1+|\mu|}{|1-|\mu||}.$
╥ръшь юсЁрчюь,  $K_{\mu}(z)\in L_{loc}^1(D\setminus\{z_0\}).$
╧юёъюы№ъє $f$ -- ъюэхўэюую шёърцхэш , шч (\ref{eq1}) эхьхфыхээю
ёыхфєхЄ, ўЄю $g$ Єръцх ъюэхўэюую шёърцхэш  ш яЁш яюўЄш тёхї $z\in
D\setminus\{z_0\}$ т√яюыэхэ√ ёююЄэю°хэш 
$|\partial g|\leqslant |\partial g|+ |\overline{\partial} g|=
K^{1/2}_{\mu}(z)(|J(f, z)|)^{1/2},$
юЄъєфр яю эхЁртхэёЄтє ├╕ы№фхЁр $|\partial g|\in L_{loc}^1
(D\setminus\{z_0\})$ ш $|\overline{\partial} g|\in L_{loc}^1
(D\setminus\{z_0\}).$ ╤ыхфютрЄхы№эю, $g\in
W_{loc}^{1,1}(D\setminus\{z_0\})$ ш $g$ шьххЄ ъюэхўэюх шёърцхэшх.

┬ Єръюь ёыєўрх, $g$ яЁюфюыцрхЄё  фю уюьхюьюЁЇшчьр $g: D\rightarrow
{\Bbb C}$ ттшфє \cite[ыхььр~4 ш ЄхюЁхьр~4]{Lom}. ╥юуфр $\varphi$
яЁюфюыцрхЄё  яю эхяЁхЁ√тэюёЄш т Єюўъє $g(z_0)$ юсырёЄш $g(D)$ ттшфє
ъырёёшўхёъющ ЄхюЁхь√ ╧шърЁр, ўЄю ш фюърч√трхЄ ЄхюЁхьє. $\Box$

\medskip
\section{═хъюЄюЁ√х ёыхфёЄтш  ш чрьхўрэш } ┼∙╕ юфшэ трцэ√щ Ёхчєы№ЄрЄ,
юЄэюё ∙шщё  ъ єёЄЁрэхэш■ юёюсхээюёЄхщ ъырёёют ╬Ёышўр--╤юсюыхтр,
ърёрхЄё  ЇєэъЎшщ ъюэхўэюую ёЁхфэхую ъюыхсрэш  (ёь. \cite{MRSY} ш
\cite{IR}).

\medskip
┬ фры№эхщ°хь эрь яюэрфюсшЄё  ёыхфє■∙хх тёяюьюурЄхы№эюх єЄтхЁцфхэшх
(ёь., эряЁ., \cite[ыхььр~7.4, уы.~7]{MRSY} ш
\cite[ыхььр~2.2]{RS$_1$}) яЁш $p=n$ ш \cite[ыхььр~2.2]{Sal} яЁш
$p\ne n.$

\medskip
\begin{proposition}\label{pr1A}
{\, ╧єёЄ№  $x_0 \in {\Bbb R}^n,$ $Q(x)$ -- шчьхЁшьр  яю ╦хсхує
ЇєэъЎш , $Q:{\Bbb R}^n\rightarrow [0, \infty],$ $Q\in
L_{loc}^1({\Bbb R}^n).$ ╧юырурхь $A:=A(r_1,r_2,x_0)=\{ x\,\in\,{\Bbb
R}^n : r_1<|x-x_0|<r_2\}$ ш
$\eta_0(r)=\frac{1}{Ir^{\frac{n-1}{p-1}}q_{x_0}^{\frac{1}{p-1}}(r)},$
уфх $I:=I=I(x_0,r_1,r_2)=\int\limits_{r_1}^{r_2}\
\frac{dr}{r^{\frac{n-1}{p-1}}q_{x_0}^{\frac{1}{p-1}}(r)}$ ш
$q_{x_0}(r):=\frac{1}{\omega_{n-1}r^{n-1}}\int\limits_{|x-x_0|=r}Q(x)\,d{\mathcal
H}^{n-1}$ -- ёЁхфэхх шэЄхуЁры№эюх чэрўхэшх ЇєэъЎшш $Q$ эрф ёЇхЁющ
$S(x_0, r).$ ╥юуфр
\begin{equation}\label{eq10A}
\frac{\omega_{n-1}}{I^{p-1}}=\int\limits_{A} Q(x)\cdot
\eta_0^p(|x-x_0|)\ dm(x)\leqslant\int\limits_{A} Q(x)\cdot
\eta^p(|x-x_0|)\ dm(x)
\end{equation}
фы  ы■сющ шчьхЁшьющ яю ╦хсхує ЇєэъЎшш $\eta :(r_1,r_2)\rightarrow
[0,\infty]$ Єръющ, ўЄю
$\int\limits_{r_1}^{r_2}\eta(r)dr=1. $ }
\end{proposition}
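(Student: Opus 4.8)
The plan is to collapse the $n$-dimensional integrals in \eqref{eq10A} to one-dimensional integrals over $(r_1,r_2)$ by integrating over spherical shells about $x_0$, and then to read off both the equality and the inequality from H\"older's inequality, $\eta_0$ being exactly the extremal weight.

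First I would invoke the Fubini theorem (equivalently the coarea formula for $x\mapsto|x-x_0|$), using the measurability of $Q$, to write for an arbitrary nonnegative measurable $\eta$ on $(r_1,r_2)$
\[
\int_{A}Q(x)\,\eta^{p}(|x-x_0|)\,dm(x)=\int_{r_1}^{r_2}\eta^{p}(r)\left(\int_{S(x_0,r)}Q\,d{\mathcal H}^{n-1}\right)dr=\omega_{n-1}\int_{r_1}^{r_2}\eta^{p}(r)\,r^{n-1}q_{x_0}(r)\,dr ,
\]
the last equality being just the definition of $q_{x_0}$. Set $w(r):=r^{n-1}q_{x_0}(r)$. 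Using the elementary exponent identities $\frac{p(n-1)}{p-1}-(n-1)=\frac{n-1}{p-1}$ and $\frac{p}{p-1}-1=\frac{1}{p-1}$, one sees that $I=\int_{r_1}^{r_2}w(r)^{-1/(p-1)}\,dr$ and that $\int_{r_1}^{r_2}\eta_0(r)\,dr=\frac{1}{I}\int_{r_1}^{r_2}w(r)^{-1/(p-1)}\,dr=1$, so that $\eta_0$ is itself an admissible weight. Hence the whole statement reduces to the one-dimensional assertion $\omega_{n-1}\int_{r_1}^{r_2}\eta^{p}(r)w(r)\,dr\geqslant\omega_{n-1}/I^{p-1}$ for all admissible $\eta$, with equality at $\eta=\eta_0$.

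The equality is then a direct substitution, $\omega_{n-1}\int_{r_1}^{r_2}\eta_0^{p}(r)\,w(r)\,dr=\frac{\omega_{n-1}}{I^{p}}\int_{r_1}^{r_2}w(r)^{-1/(p-1)}\,dr=\frac{\omega_{n-1}}{I^{p-1}}$. For the inequality I would apply H\"older's inequality with the conjugate exponents $p$ and $p'=p/(p-1)$ to the factorization $\eta=(\eta\,w^{1/p})\cdot w^{-1/p}$:
\[
1=\int_{r_1}^{r_2}\eta(r)\,dr\leqslant\left(\int_{r_1}^{r_2}\eta^{p}(r)\,w(r)\,dr\right)^{1/p}\left(\int_{r_1}^{r_2}w(r)^{-p'/p}\,dr\right)^{1/p'}=\left(\int_{r_1}^{r_2}\eta^{p}(r)\,w(r)\,dr\right)^{1/p}I^{(p-1)/p},
\]
since $p'/p=1/(p-1)$. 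Raising to the $p$-th power and multiplying by $\omega_{n-1}$ yields $\int_{A}Q(x)\,\eta^{p}(|x-x_0|)\,dm(x)=\omega_{n-1}\int_{r_1}^{r_2}\eta^{p}(r)w(r)\,dr\geqslant\omega_{n-1}/I^{p-1}$, which is \eqref{eq10A}.

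The one point that calls for care is the bookkeeping of the degenerate values of $q_{x_0}$ (hence of $w$) and of $I$. If $I=\infty$, the right-hand side of \eqref{eq10A} vanishes and there is nothing to prove; if $w=\infty$ on a subset of $\{\eta>0\}$ of positive measure, the left-hand side is $\infty$; and $w=0$ on a set of positive measure forces $I=\infty$. After discarding these cases one has $0<I<\infty$ and $0<w<\infty$ almost everywhere on the support of $\eta$, so the steps above are legitimate in the extended-real sense. No deeper obstacle arises: this is the standard extremal-length computation, cf. \cite{MRSY}.
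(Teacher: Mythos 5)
Your argument is correct and is essentially the standard one: the paper itself does not prove Proposition \ref{pr1A} but quotes it from the literature (\cite{MRSY} for $p=n$ and \cite{Sal} for $p\ne n$), and the proof in those sources is exactly your computation --- integration over the spheres $S(x_0,r)$ to reduce to a one-dimensional problem with weight $w(r)=r^{n-1}q_{x_0}(r)$, the check that $\eta_0=I^{-1}w^{-1/(p-1)}$ is admissible and gives the value $\omega_{n-1}/I^{p-1}$, and H\"older's inequality with exponents $p$ and $p/(p-1)$ applied to $\eta=(\eta w^{1/p})\cdot w^{-1/p}$. Your bookkeeping of the degenerate cases ($I=\infty$, $w$ infinite or vanishing on a set of positive measure) is also adequate, so nothing is missing.
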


\medskip ┴єфхь уютюЁшЄ№, ўЄю ыюъры№эю шэЄхуЁшЁєхьр 
ЇєэъЎш  ${\varphi}:D\rightarrow{\Bbb R}$ шьххЄ {\it ъюэхўэюх ёЁхфэхх
ъюыхсрэшх} т Єюўъх $x_0\in D$, яш°хь $\varphi\in FMO(x_0),$ хёыш
%
%
%
%
$$\limsup\limits_{\varepsilon\rightarrow
0}\frac{1}{\Omega_n\varepsilon^n}\int\limits_{B( x_0,\,\varepsilon)}
|{\varphi}(x)-\overline{{\varphi}}_{\varepsilon}|\,
dm(x)<\infty\,,$$
%
%
уфх
$\overline{{\varphi}}_{\varepsilon}=\frac{1}
{\Omega_n\varepsilon^n}\int\limits_{B(x_0,\,\varepsilon)}
{\varphi}(x)\, dm(x).$
\medskip
╩ръ шчтхёЄэю, $\Omega_n\varepsilon^n=m(B(x_0, \varepsilon)).$ ╚ьххЄ
ьхёЄю ёыхфє■∙р 

\medskip
\begin{theorem}\label{th3}
{\, ╧єёЄ№ $n\geqslant 3,$ $D$ -- юуЁрэшўхээр  юсырёЄ№ т ${\Bbb
R}^n,$  $n-1<\alpha\leqslant n,$ $x_0\in D,$ Єюуфр ърцфюх юЄъЁ√Єюх,
фшёъЁхЄэюх ш чрьъэєЄюх юуЁрэшўхээюх юЄюсЁрцхэшх
$f:D\setminus\{x_0\}\rightarrow {\Bbb R}^n$ ъырёёр $W_{loc}^{1,
\varphi}(D\setminus\{x_0\})$ ё ъюэхўэ√ь шёърцхэшхь Єръюх, ўЄю $C(f,
x_0)\cap C(f,
\partial D)=\varnothing,$ яЁюфюыцрхЄё  т Єюўъє $x_0$
эхяЁхЁ√тэ√ь юсЁрчюь фю юЄюсЁрцхэш  $f:D\rightarrow {\Bbb R}^n,$ хёыш
т√яюыэхэю єёыютшх (\ref{eqOS3.0a}) ш, ъЁюьх Єюую, эрщф╕Єё  ЇєэъЎш 
$Q\in L_{loc}^{1}(D),$ Єрър  ўЄю $K_{I,\alpha}(x, f)\leqslant Q(x)$
яЁш яюўЄш тёхї $x\in D$ ш $Q\in FMO (x_0).$}
\end{theorem}

\medskip
\begin{proof} ─юёЄрЄюўэю яюърчрЄ№, ўЄю
єёыютшх $Q\in FMO(x_0)$ тыхў╕Є ЁрёїюфшьюёЄ№ шэЄхуЁрыр (\ref{eq9}),
яюёъюы№ъє т ¤Єюьє ёыєўрх эхюсїюфшьюх чръы■ўхэшх сєфхЄ ёыхфютрЄ№ шч
ЄхюЁхь√ \ref{th1}. ╟рьхЄшь, ўЄю фы  ЇєэъЎшщ ъырёёр $FMO$  т Єюўъх
$x_0$
\begin{equation}\label{eq31*}
\int\limits_{\varepsilon<|x|<{e_0}}\frac{Q(x+x_0)\, dm(x)}
{\left(|x| \log \frac{1}{|x|}\right)^n} = O \left(\log\log
\frac{1}{\varepsilon}\right)
\end{equation}
яЁш  $\varepsilon \rightarrow 0 $ ш фы  эхъюЄюЁюую $e_0>0,$ $e_0
\leqslant {\rm dist}\,\left(0,\partial D\right).$ ╧Ёш
$\varepsilon_0<r_0:={\rm dist}\,\left(0,\partial D\right)$ яюырурхь
$\psi(t):=\frac{1}{\left(t\,\log{\frac1t}\right)^{n/{\alpha}}},$
$I(\varepsilon,
\varepsilon_0):=\int\limits_{\varepsilon}^{\varepsilon_0}\psi(t) dt
\geqslant \log{\frac{\log{\frac{1}
{\varepsilon}}}{\log{\frac{1}{\varepsilon_0}}}}$ ш
$\eta(t):=\psi(t)/I(\varepsilon, \varepsilon_0).$ ╟рьхЄшь, ўЄю
$\int\limits_{\varepsilon}^{\varepsilon_0}\eta(t)dt=1,$ ъЁюьх Єюую,
шч ёююЄэю°хэш  (\ref{eq31*}) т√ЄхърхЄ, ўЄю
\begin{equation}\label{eq32*}
\frac{1}{I^{\alpha}(\varepsilon,
\varepsilon_0)}\int\limits_{\varepsilon<|x|<\varepsilon_0}
Q(x+x_0)\cdot\psi^{\alpha}(|x|)
 \ dm(x)\leqslant C\left(\log\log\frac{1}{\varepsilon}\right)^{1-{\alpha}}\rightarrow
 0
 \end{equation}
яЁш $\varepsilon\rightarrow 0.$ ╚ч ёююЄэю°хэшщ (\ref{eq10A}) ш
(\ref{eq32*}) т√ЄхърхЄ, ўЄю шэЄхуЁры тшфр (\ref{eq9}) ЁрёїюфшЄё ,
ўЄю ш ЄЁхсютрыюё№ єёЄрэютшЄ№.
\end{proof}

\medskip
╤ыхфє■∙хх єЄтхЁцфхэшх ёяЁртхфыштю Єюы№ъю яЁш $\alpha\ne n.$

\medskip
\begin{theorem}\label{th4}
{\, ╧єёЄ№ $n\geqslant 3,$ $D$ -- юуЁрэшўхээр  юсырёЄ№ т ${\Bbb
R}^n,$  $\alpha\in (n-1, n),$ $x_0\in D,$ Єюуфр ърцфюх юЄъЁ√Єюх,
фшёъЁхЄэюх ш чрьъэєЄюх юуЁрэшўхээюх юЄюсЁрцхэшх
$f:D\setminus\{x_0\}\rightarrow {\Bbb R}^n$ ъырёёр $W_{loc}^{1,
\varphi}(D\setminus\{x_0\})$ ё ъюэхўэ√ь шёърцхэшхь Єръюх, ўЄю $C(f,
x_0)\cap C(f,
\partial D)=\varnothing,$ яЁюфюыцрхЄё  т Єюўъє $x_0$
эхяЁхЁ√тэ√ь юсЁрчюь фю юЄюсЁрцхэш  $f:D\rightarrow {\Bbb R}^n,$ хёыш
т√яюыэхэю єёыютшх (\ref{eqOS3.0a}) ш, ъЁюьх Єюую, эрщф╕Єё  ЇєэъЎш 
$Q\in L_{loc}^{1}(D),$ Єрър  ўЄю $K_{I,\alpha}(x, f)\leqslant Q(x)$
яЁш яюўЄш тёхї $x\in D$ ш $Q\in L_{loc}^s({\Bbb R}^n)$ яЁш эхъюЄюЁюь
$s\geqslant\frac{n}{n-\alpha}.$}
\end{theorem}

\medskip
\begin{proof}
╠юцэю ёўшЄрЄ№, ўЄю $x_0=0.$ ╟рЇшъёшЁєхь яЁюшчтюы№э√ь юсЁрчюь
$0<\varepsilon_0<\infty$ ш яюыюцшь $G:=B(0, \varepsilon_0),$
$\psi(t):=1/t.$ ╟рьхЄшь, ўЄю єърчрээр  ЇєэъЎш  $\psi$ єфютыхЄтюЁ хЄ
эхЁртхэёЄтрь $0< I(\varepsilon,
\varepsilon_0):=\int\limits_{\varepsilon}^{\varepsilon_0}\psi(t)dt <
\infty.$ ╧юърцхь Єръцх, ўЄю т ¤Єюь ёыєўрх т√яюыэхэю ёююЄэю°хэшх
\begin{equation} \label{eq4!}
\int\limits_{A(\varepsilon, \varepsilon_0,
0)}Q(x)\cdot\psi^{\alpha}(|x|) \
dm(x)\,=\,o\left(I^{\alpha}(\varepsilon, \varepsilon_0)\right)\,.
\end{equation}
╧Ёшьхэ   эхЁртхэёЄтю ├╕ы№фхЁр, сєфхь шьхЄ№
$$\int\limits_{\varepsilon<|x-b|<\varepsilon_0}\frac{Q(x)}{|x-b|^{\alpha}}
\ dm(x)\leqslant$$
\begin{equation}\label{eq13}
\leqslant
\left(\int\limits_{\varepsilon<|x-b|<\varepsilon_0}\frac{1}{|x-b|^{\alpha
q}} \ dm(x) \right)^{\frac{1}{q}}\,\left(\int\limits_{G}
Q^{q^{\prime}}(x)\ dm(x)\right)^{\frac{1}{q^{\prime}}}\,,
\end{equation}
уфх  $1/q+1/q^{\prime}=1$. ╟рьхЄшь, ўЄю яхЁт√щ шэЄхуЁры т яЁртющ
ўрёЄш эхЁртхэёЄтр (\ref{eq13}) ьюцхЄ с√Є№ яюфёўшЄрэ эхяюёЁхфёЄтхээю.
─хщёЄтшЄхы№эю, яєёЄ№ фы  эрўрыр $q^{\prime}=\frac{n}{n-\alpha}$ (ш,
ёыхфютрЄхы№эю, $q=\frac{n}{\alpha}.$) ┬тшфє ЄхюЁхь√ ╘єсшэш сєфхь
шьхЄ№:
$$
\int\limits_{\varepsilon<|x-b|<\varepsilon_0}\frac{1}{|x-b|^{\alpha
q}} \ dm(x)=\omega_{n-1}\int\limits_{\varepsilon}^{\varepsilon_0}
\frac{dt}{t}=\omega_{n-1}\log\frac{\varepsilon_0}{\varepsilon}\,.
$$
┬ юсючэрўхэш ї ыхьь√ \ref{lemma1} ь√ сєфхь шьхЄ№, ўЄю яЁш
$\varepsilon\rightarrow 0$
$$
\frac{1}{I^{\alpha}(\varepsilon,
\varepsilon_0)}\int\limits_{\varepsilon<|x-b|<\varepsilon_0}\frac{Q(x)}{|x-b|^{\alpha}}
\ dm(x)\leqslant \omega^{\frac{\alpha}{n}}_{n-1}\Vert
Q\Vert_{L^{\frac{n}{n-\alpha}}(G)}\left(\log\frac{\varepsilon_0}{\varepsilon}\right)
^{-\alpha+\frac{\alpha}{n}}\,\rightarrow 0\,,
$$
ўЄю тыхў╕Є т√яюыэхэшх ёююЄэю°хэш  (\ref{eq4!}).

\medskip
╧єёЄ№ ЄхяхЁ№ $q^{\prime}>\frac{n}{n-\alpha}$ (Є.х.,
$q=\frac{q^{\prime}}{q^{\prime}-1}$). ┬ ¤Єюь ёыєўрх
$$
\int\limits_{\varepsilon<|x-b|<\varepsilon_0}\frac{1}{|x-b|^{\alpha
q}} \ dm(x) = \omega_{n-1}\int\limits_{\varepsilon}^{\varepsilon_0}
t^{n-\frac{\alpha
q^{\prime}}{q^{\prime}-1}-1}dt\leqslant$$$$\leqslant
\omega_{n-1}\int\limits_{0}^{\varepsilon_0} t^{n-\frac{\alpha
q^{\prime}}{q^{\prime}-1}-1}dt =\frac{\omega_{n-1}}{n-\frac{\alpha
q^{\prime}}{q^{\prime}-1}}\varepsilon^{n-\frac{\alpha
q^{\prime}}{q^{\prime}-1}}_0,
$$
ш, чэрўшЄ,
$$
\frac{1}{I^{\alpha}(\varepsilon, \varepsilon_0)}
\int\limits_{\varepsilon<|x-b|<\varepsilon_0}\frac{Q(x)}{|x-b|^{\alpha}}
\ dm(x)\leqslant \Vert
Q\Vert_{L^{q^{\prime}}(G)}\left(\frac{\omega_{n-1}}{n-\frac{\alpha
q^{\prime}}{q^{\prime}-1}} \varepsilon^{n-\frac{\alpha
q^{\prime}}{q^{\prime}-1}}_0\right)^{\frac{1}{q}}\left(\log\frac{\varepsilon_0}{\varepsilon}\right)
^{-\alpha}\,,
$$
ўЄю Єръцх тыхў╕Є т√яюыэхэшх ёююЄэю°хэш  (\ref{eq4!}).
╧юыюцшь ЄхяхЁ№ т (\ref{eq10A}) $\eta(t)=\psi(t)/I(r_1, r_2).$ ╥юуфр
шч (\ref{eq10A}) эхьхфыхээю ёыхфєхЄ, ўЄю $\int\limits_{r_1}^{r_2}\
\frac{dr}{r^{\frac{n-1}{\alpha-1}}q_{x_0}^{\frac{1}{\alpha-1}}(r)}\rightarrow
\infty$ яЁш $r_1\rightarrow 0.$ ┬ Єръюь ёыєўрх, чръы■ўхэшх фрээющ
ЄхюЁхь√ т√ЄхърхЄ шч ЄхюЁхь√ \ref{th1}.
\end{proof}

\medskip ╥хяхЁ№ ЁрёёьюЄЁшь эхъюЄюЁ√х яЁшьхЁ√. ╟рьхЄшь, яЁхцфх тёхую, ўЄю яЁюшчтюфэр 
$\frac{\partial f}{\partial e}(x_0)=\lim\limits_{t\rightarrow
+0}\frac{f(x_0+te)-f(x_0)}{t}$ юЄюсЁрцхэш  $f$ яю эряЁртыхэш■ $e\in
{\Bbb S}^{n-1}$ т Єюўъх хую фшЇЇхЁхэЎшЁєхьюёЄш $x_0$ ьюцхЄ с√Є№
т√ўшёыхэр яю яЁртшыє: $\frac{\partial f}{\partial
e}(x_0)=f^{\,\prime}(x_0)e.$ ╥ръшь юсЁрчюь, яєЄ╕ь яЁ ьюую т√ўшёыхэш 
ьюцэю єсхфшЄ№ё  т ёяЁртхфыштюёЄш ёыхфє■∙хую єЄтхЁцфхэш .

\medskip
\begin{proposition}\label{pr1C}
{\, ╧єёЄ№ юЄюсЁрцхэшх $f: B(0, p)\rightarrow {\Bbb R}^n$ шьххЄ тшф
$$
f(x)=\frac{x}{|x|}\rho(|x|)\,,
$$
уфх ЇєэъЎш  $\rho(t):(0, p)\rightarrow {\Bbb R}$ эхяЁхЁ√тэр ш
фшЇЇхЁхэЎшЁєхьр яюўЄш тё■фє. ╥юуфр $f$ Єръцх фшЇЇхЁхэЎшЁєхью яюўЄш
тё■фє, яЁш ¤Єюь, т ърцфющ Єюўъх $x_0$ фшЇЇхЁхэЎшЁєхьюёЄш юЄюсЁрцхэш 
$f$ т ърўхёЄтх уыртэ√ї тхъЄюЁют $e_{i_1},\ldots, e_{i_n}$ ш
$\widetilde{e_{i_1}},\ldots, \widetilde{e_{i_n}}$ ьюцэю тч Є№
$(n-1)$ ышэхщэю эхчртшёшь√ї ърёрЄхы№э√ї тхъЄюЁют ъ ёЇхЁх $S(0, r)$ т
Єюўъх $x_0,$ уфх $|x_0|=r,$ ш юфшэ юЁЄюуюэры№э√щ ъ эшь тхъЄюЁ т
єърчрээющ Єюўъх.

╤ююЄтхЄёЄтє■∙шх уыртэ√х ЁрёЄ цхэш  (эрч√трхь√х, ёююЄтхЄёЄтхээю, {\it
ърёрЄхы№э√ьш ЁрёЄ цхэш ьш} ш {\it Ёрфшры№э√ь ЁрёЄ цхэшхь}) Ёртэ√

$\lambda_{\tau}(x_0):=\lambda_{i_1}(x_0)=\ldots=\lambda_{i_{n-1}}(x_0)=\frac{\rho(r)}{r}$
ш $\lambda_{r}(x_0):=\lambda_{i_n}=\rho^{\,\prime}(r),$
ёююЄтхЄёЄтхээю.}
\end{proposition}

\medskip
╬ЄьхЄшь, ўЄю фы  уыртэ√ї ЁрёЄ цхэшщ $\lambda_{i_k},$ $k\in 1,2,
\ldots, n,$ ь√ эрьхЁхээю шёяюы№чютрыш фтющэє■ шэфхъёрЎш■, яюёъюы№ъє,
ъръ ь√ єёыютшышё№ т√°х, ъюэхўэє■ яюёыхфютрЄхы№эюёЄ№ $\lambda_i,$
$i\in 1,2,\ldots, n$ ь√ яЁхфяюырурхь тючЁрёЄр■∙хщ яю $i:$
$\lambda_1\le\lambda_2\le\ldots\le\lambda_n.$ ┼ёЄхёЄтхээю, ўЄю т
ЇшъёшЁютрээющ Єюўъх $x_0$ Ёрфшры№э√х ЁрёЄ цхэш 
$\lambda_{i_1}(x_0)=\ldots=\lambda_{i_{n-1}}(x_0)=\frac{\rho(r)}{r}$
ьюуєЄ с√Є№ эх сюы№°х ърёрЄхы№эюую ЁрёЄ цхэш 
$\lambda_{i_n}=\rho^{\,\prime}(r),$ ш эрюсюЁюЄ.

\medskip
╤ыхфє■∙хх єЄтхЁцфхэшх яюърч√трхЄ, ўЄю т єёыютш ї ЄхюЁхь \ref{th1} ш
\ref{th3} ЄЁхсютрэш  эр ЇєэъЎш■ $Q$ эхы№ч , тююс∙х уютюЁ , чрьхэшЄ№
єёыютшхь $Q\in L^p$ эш фы  ъръюую (ёъюы№ єуюфэю сюы№°юую) $p>0$ ш
фы  ы■сющ эхєс√тр■∙хщ ЇєэъЎшш $\varphi(t).$ ─ы  яЁюёЄюЄ√ ЁрёёьюЄЁшь
ёыєўрщ, ъюуфр $D={\Bbb B}^n,$ $n\geqslant 3.$

\medskip
\begin{theorem}\label{th3.10.1}{\,
╧єёЄ№ $\varphi:[0,\infty)\rightarrow[0,\infty)$ -- яЁюшчтюы№эр 
эхєс√тр■∙р  ЇєэъЎш . ─ы  ърцфюую $p\geqslant 1$ ш
$n-1<\beta\leqslant n$ ёє∙хёЄтє■Є ЇєэъЎш  $Q:{\Bbb B}^n\rightarrow
[1, \infty],$ $Q(x)\in L^p({\Bbb B}^n)$ ш ЁртэюьхЁэю юуЁрэшўхээ√щ
уюьхюьюЁЇшчь $g:{\Bbb B}^n\setminus\{0\}\rightarrow {\Bbb R}^n,$
$g\in W_{loc}^{1, \varphi}({\Bbb B}^n\setminus\{0\}),$ шьх■∙шщ
ъюэхўэюх шёърцхэшх, Єръющ ўЄю $K_{I, \beta}(x, f)\leqslant Q(x),$
яЁш ¤Єюь, $g$ эх яЁюфюыцрхЄё  яю эхяЁхЁ√тэюёЄш т Єюўъє $x_0=0.$}
\end{theorem}

\medskip
\begin{proof} ╨рёёьюЄЁшь ёыхфє■∙шщ яЁшьхЁ.
╟рЇшъёшЁєхь ўшёыр $p\geqslant 1$ ш $\alpha\in \left(0,
n/p(n-1)\right).$ ╠юцэю ёўшЄрЄ№, ўЄю $\alpha<1$ т ёшыє
яЁюшчтюы№эюёЄш т√сюЁр $p.$ ╟рфрфшь уюьхюьюЁЇшчь $g:{\Bbb
B}^n\setminus\{0\}\rightarrow {\Bbb R}^n$ ёыхфє■∙шь юсЁрчюь:
$g(x)=\frac{1+|x|^{\alpha}}{|x|}\cdot x.$
╟рьхЄшь, ўЄю юЄюсЁрцхэшх $g$ яхЁхтюфшЄ °рЁ $D={\Bbb B}^n$ т ъюы№Ўю
$D^{\,\prime}=B(0,2)\setminus {\Bbb B}^n,$ яЁш ¤Єюь, $C(g, 0)={\Bbb
S}^{n-1}$ (юЄё■фр т√ЄхърхЄ, ўЄю $g$ эх шьххЄ яЁхфхыр т эєых).
╟рьхЄшь, ўЄю $g\in C^{1}({\Bbb B}^n\setminus \{0\}),$ т ўрёЄэюёЄш,
$g\in W_{loc}^{1,1}.$

\medskip
─рыхх, т ърцфющ Єюўъх $x\in {\Bbb B}^n\setminus \{0\}$ юЄюсЁрцхэш 
$g:{\Bbb B}^n\setminus \{0\}\rightarrow {\Bbb R}^n$ т√ўшёышь
тэєЄЁхээ■■ фшырЄрЎш■ юЄюсЁрцхэш  $g$ т Єюўъх $x$ яюЁ фър $\beta,$
тюёяюы№чютрт°шё№ яЁртшыюь (\ref{eq41}). ╧юёъюы№ъє $g$ шьххЄ тшф
$g(x)=\frac{x}{|x|}\rho(|x|),$ яЁ ь√ь яюфёў╕Єюь ёююЄтхЄёЄтє■∙шї
яЁюшчтюфэ√ї яю эряЁртыхэш■ ьюцэю єсхфшЄ№ё , ўЄю т ърўхёЄтх уыртэ√ї
тхъЄюЁют $e_{i_1},\ldots, e_{i_n}$ ш $\widetilde{e_{i_1}},\ldots,
\widetilde{e_{i_n}}$ ьюцэю тч Є№ $(n-1)$ ышэхщэю эхчртшёшь√ї
ърёрЄхы№э√ї тхъЄюЁют ъ ёЇхЁх $S(0, r)$ т Єюўъх $x_0,$ уфх $|x_0|=r,$
ш юфшэ юЁЄюуюэры№э√щ ъ эшь тхъЄюЁ т єърчрээющ Єюўъх. ╤ююЄтхЄёЄтє■∙шх
уыртэ√х ЁрёЄ цхэш  (эрч√трхь√х, ёююЄтхЄёЄтхээю, {\it ърёрЄхы№э√ьш
ЁрёЄ цхэш ьш} ш {\it Ёрфшры№э√ь ЁрёЄ цхэшхь}) Ёртэ√
$\lambda_{\tau}(x_0):=\lambda_{i_1}(x_0)=\ldots=\lambda_{i_{n-1}}(x_0)=\frac{\rho(r)}{r}$
ш $\lambda_{r}(x_0):=\lambda_{i_n}=\rho^{\,\prime}(r),$
ёююЄтхЄёЄтхээю.

╤юуырёэю ёърчрээюьє,
$$\lambda_{\tau}(x)=\frac{|x|^{\alpha}+1}{|x|},\lambda_{r}(x)=\alpha|x|^{\alpha-1},
l(g^{\,\prime}(x))=\alpha|x|^{\alpha-1}\,, \Vert
g^{\,\prime}(x)\Vert=\frac{|x|^{\alpha}+1}{|x|}\,,$$
$$|J(x, g)|=
\left(\frac{|x|^{\alpha}+1}{|x|}\right)^{n-1}\cdot
\alpha|x|^{\alpha-1}$$ ш
$K_{I, \beta}(x, g)=c(\alpha)\cdot\frac{(1+|x|^{\,\alpha})^{n-1}}{
|x|^{\,(\alpha-1)(\beta-1)+n-1}}.$
╟рьхЄшь, ўЄю хёыш $G$ -- яЁюшчтюы№эр  ъюьяръЄэр  юсырёЄ№ т ${\Bbb
B}^n\setminus\{0\},$ Єю $\Vert g^{\,\prime}(x)\Vert\leqslant
c(G)<\infty,$ ъЁюьх Єюую, эхЄЁєфэю тшфхЄ№, ўЄю $|\nabla
g(x)|\leqslant n^{1/2}\cdot\Vert g^{\,\prime}(x)\Vert$ яЁш яюўЄш
тёхї $x\in {\Bbb B}^n\setminus\{0\}.$ ╥юуфр ттшфє эхєс√трэш  ЇєэъЎшш
$\varphi$ т√яюыэхэю:
$\int\limits_{G} \varphi(|\nabla
g(x)|)dm(x)\leqslant\varphi(n^{1/2}c(G))\cdot m(G)<\infty,$ Є.х.,
$g\in W^{1, \varphi}(G).$
╟рьхЄшь, ўЄю юЄюсЁрцхэшх $g$ шьххЄ ъюэхўэюх шёърцхэшх, яюёъюы№ъє хую
 ъюсшрэ яюўЄш тё■фє эх Ёртхэ эєы■; ъЁюьх Єюую, $K_{I, \beta}(x,
g)=c(\alpha)\cdot\frac{(1+|x|^{\,\alpha})^{n-1}}{
|x|^{\,(\alpha-1)(\beta-1)+n-1}}.$ ╧юырурхь:
$Q=\frac{(1+|x|^{\,\alpha})^{n-1}}{
|x|^{\,(\alpha-1)(\beta-1)+n-1}},$ Єюуфр
$Q(x)\leqslant \frac{C}{|x|^{\alpha(n-1)}}.$ ╥ръшь юсЁрчюь,
яюыєўрхь:
$$\int\limits_{{\Bbb B}^n}\left(Q(x)\right)^p dm(x)\leqslant C^p
\int\limits_{{\Bbb B}^n}\frac{dm(x)}{|x|^{p\alpha(n-1)}}=$$
\begin{equation}\label{eq2.3A}=C^p\int\limits_0^1\int\limits_{S(0,
r)}\frac{d{\mathcal{A}}}{|x|^{p\alpha(n-1)}}\,dr=\omega_{n-1}C^p
\int\limits_0^1\frac{dr}{r^{(n-1)(p\alpha-1)}}\,.\end{equation}
╒юЁю°ю шчтхёЄэю, ўЄю шэЄхуЁры
$I:=\int\limits_0^1\frac{dr}{r^{\gamma}}$ ёїюфшЄё  яЁш $\gamma<1.$
╥ръшь юсЁрчюь, шэЄхуЁры т яЁртющ ўрёЄш ёююЄэю°хэш  (\ref{eq2.3A})
ёїюфшЄё , яюёъюы№ъє яюърчрЄхы№ ёЄхяхэш $\gamma:=(n-1)(p\alpha-1)$
єфютыхЄтюЁ хЄ єёыютш■ $\gamma<1$ яЁш $\alpha\in (0, n/p(n-1)).$
╬Єё■фр т√ЄхърхЄ, ўЄю $Q(x)\in L^p({\Bbb B}^n).$
\end{proof}

\medskip
╤ыхфє■∙хх єЄтхЁцфхэшх ёюфхЁцшЄ т ёхсх чръы■ўхэшх ю Єюь, ўЄю єёыютшх
(\ref{eq9})  ты хЄё  эх Єюы№ъю фюёЄрЄюўэ√ь, эю т эхъюЄюЁюь ёь√ёых ш
эхюсїюфшь√ь єёыютшхь тючьюцэюёЄш эхяЁхЁ√тэюую яЁюфюыцхэш 
юЄюсЁрцхэш  т шчюышЁютрээє■ уЁрэшўэє■ Єюўъє.

\medskip
\begin{theorem}\label{th5} { ╧єёЄ№ $\varphi:[0,\infty)\rightarrow[0,\infty)$ -- яЁюшчтюы№эр 
эхєс√тр■∙р  ЇєэъЎш , ш $n-1<\alpha\leqslant n$ ш
$0<\varepsilon_0<1.$ ─ы  ърцфющ шчьхЁшьющ яю ╦хсхує ЇєэъЎшш $Q:{\Bbb
B}^n\rightarrow [1, \infty],$ $Q\in L_{loc}({\Bbb B}^n),$ Єръющ, ўЄю
$\int\limits_{0}^{\varepsilon_0}\frac{dt}{t^{\frac{n-1}{\alpha-1}}
q_{0}^{\,\frac{1}{\alpha-1}}(t)}<\infty,$ эрщф╕Єё  юуЁрэшўхээюх
юЄюсЁрцхэшх $f\in W_{loc}^{1, \varphi}({\Bbb B}^n\setminus\{0\})$ ё
ъюэхўэ√ь шёърцхэшхь, ъюЄюЁюх эх ьюцхЄ с√Є№ яЁюфюыцхэю т Єюўъє
$x_0=0$ эхяЁхЁ√тэ√ь юсЁрчюь, яЁш ¤Єюь, $K_{I, \alpha}(x, f)\leqslant
\widetilde{Q}(x)$ я.т., уфх $\widetilde{Q}(x)$ -- эхъюЄюЁр 
шчьхЁшьр  яю ╦хсхує ЇєэъЎш , Єрър  ўЄю
$$\widetilde{q}_0(r):=\frac{1}{\omega_{n-1}r^{n-1}}\int\limits_{S(0,
r)}\widetilde{Q}(x)d{\mathcal H}^{n-1}=q_0(r)$$ фы  яюўЄш тёхї $r\in
(0, 1).$}
\end{theorem}

\begin{proof} ╤эрўрыр ЁрёёьюЄЁшь ёыєўрщ $\alpha=n.$
╬яЁхфхышь юЄюсЁрцхэшх $f:{\Bbb B}^n\setminus\{0\}\rightarrow {\Bbb
R}^n$ ёыхфє■∙шь юсЁрчюь: $f(x)=\frac{x}{|x|}\rho(|x|),$ уфх
$\rho(r)=\exp\left\{-\int\limits_{r}^1\frac{dt}{tq_{0}^{1/(n-1)}(t)}\right\}.$
╟рьхЄшь, ўЄю $f\in ACL$ ш юЄюсЁрцхэшх $f$ фшЇЇхЁхэЎшЁєхью яюўЄш
тё■фє т ${\Bbb B}^n\setminus\{0\}.$ ┬тшфє Єхїэшъш, шчыюцхээющ яхЁхф
ЇюЁьєышЁютъющ ыхьь√ \ref{thOS4.1},
$$\Vert
f^{\,\prime}(x)\Vert=\frac{\exp\left\{-\int\limits_{|x|}^1\frac{dt}{tq_{0}^{1/(n-1)}(t)}\right\}}{|x|}\,,
l(f^{\,\prime}(x))=\frac{\exp\left\{-\int\limits_{|x|}^1\frac{dt}{tq_{0}^{1/(n-1)}(t)}\right\}}
{|x|q_{0}^{1/(n-1)}(|x|)}$$ ш $|J(x, f)|=\frac{\exp\left\{-n\int
\limits_{|x|}^1\frac{dt}{tq_{0}^{1/(n-1)}(t)}\right\}}{|x|^nq_{0}^{1/(n-1)}(|x|)}.$
╟рьхЄшь, ўЄю $J(x, f)\ne 0$ яЁш яюўЄш тёхї $x,$ чэрўшЄ, $f$ --
юЄюсЁрцхэшх ё ъюэхўэ√ь шёърцхэшхь. ╩Ёюьх Єюую, юЄьхЄшь, ўЄю
$\varphi(|\nabla f(x)|)\in L_{loc}^1({\Bbb B}^n\setminus\{0\}),$
яюёъюы№ъє $\Vert f^{\,\prime}(x)\Vert$ ыюъры№эю юуЁрэшўхэр т ${\Bbb
B}^n\setminus\{0\},$ р $\varphi$ -- эхєс√тр■∙р  ЇєэъЎш . ╧єЄ╕ь
эхяюёЁхфёЄтхээ√ї т√ўшёыхэшщ єсхцфрхьё , ўЄю $K_I(x, f)=q_{0}(|x|).$
╧юырурхь $\widetilde{Q}(x):=q_{0}(|x|),$ Єюуфр сєфхь шьхЄ№, ўЄю
$\widetilde{q}_0(r)=q_0(r)$ фы  яюўЄш тёхї $r\in (0, 1).$ ═ръюэхЎ,
чрьхЄшь, ўЄю юЄюсЁрцхэшх $f$ эх яЁюфюыцрхЄё  яю эхяЁхЁ√тэюёЄш т
Єюўъє $x_0=0$ ттшфє єёыютш 
$\int\limits_{0}^{\varepsilon_0}\frac{dt}{tq_{0}^{\,\frac{1}{n-1}}(t)}<\infty.$

\medskip ╥хяхЁ№ ЁрёёьюЄЁшь ёыєўрщ $\alpha\in (n-1, n).$
╬яЁхфхышь юЄюсЁрцхэшх $f:{\Bbb B}^n\setminus\{0\}\rightarrow {\Bbb
R}^n$ ёыхфє■∙шь юсЁрчюь: $f(x)=\frac{x}{|x|}\rho(|x|),$ уфх
$$\rho(x)=\left(1+\frac{n-\alpha}{\alpha-1}\int\limits_{|x|}^1
\frac{dt}{t^{\frac{n-1}{\alpha-1}}q_0^{\frac{1}{\alpha-1}}(t)}\right)^{\frac{\alpha-1}{\alpha-n}}\,.$$
╟рьхЄшь, ўЄю $f\in ACL$ ш юЄюсЁрцхэшх $f$ фшЇЇхЁхэЎшЁєхью яюўЄш
тё■фє т ${\Bbb B}^n\setminus\{0\}.$ ┬тшфє Єхїэшъш, шчыюцхээющ яхЁхф
ЇюЁьєышЁютъющ ыхьь√ \ref{thOS4.1},
$$\Vert f^{\,\prime}(x)\Vert =\left(1+\frac{n-\alpha}{\alpha-1}\int\limits_{|x|}^1
\frac{dt}{t^{\frac{n-1}{\alpha-1}}q_0^{\frac{1}{\alpha-1}}(t)}\right)^{\frac{\alpha-1}{\alpha-n}}\cdot\frac{1}
{|x|}\,,$$
$$l(f^{\,\prime}(x))=\left(1+\frac{n-\alpha}{\alpha-1}\int\limits_{|x|}^1
\frac{dt}{t^{\frac{n-1}{\alpha-1}}q_0^{\frac{1}{\alpha-1}}(t)}\right)^{\frac{n-1}{\alpha-n}}\cdot\frac{1}
{|x|^{\frac{n-1}{\alpha-1}}q_0^{\frac{1}{\alpha-1}}(|x|)}$$
ш
$$J(x, f)=\left(1+\frac{n-\alpha}{\alpha-1}\int\limits_{|x|}^1
\frac{dt}{t^{\frac{n-1}{\alpha-1}}q_0^{\frac{1}{\alpha-1}}(t)}\right)^{\frac{(n-1)\alpha}{\alpha-n}}\cdot\frac{1}
{|x|^{n-1+\frac{n-1}{\alpha-1}}q_0^{\frac{1}{\alpha-1}}(|x|)}\,.$$
╟рьхЄшь, ўЄю $J(x, f)\ne 0$ яЁш яюўЄш тёхї $x,$ чэрўшЄ, $f$ --
юЄюсЁрцхэшх ё ъюэхўэ√ь шёърцхэшхь. ╩Ёюьх Єюую, юЄьхЄшь, ўЄю
$\varphi(|\nabla f(x)|)\in L_{loc}^1({\Bbb B}^n\setminus\{0\}),$
яюёъюы№ъє $\Vert f^{\,\prime}(x)\Vert$ ыюъры№эю юуЁрэшўхэр т ${\Bbb
B}^n\setminus\{0\},$ р $\varphi$ -- эхєс√тр■∙р  ЇєэъЎш . ╧єЄ╕ь
эхяюёЁхфёЄтхээ√ї т√ўшёыхэшщ єсхцфрхьё , ўЄю $K_I(x, f)=q_{0}(|x|).$
╧юырурхь $\widetilde{Q}(x):=q_{0}(|x|),$ Єюуфр сєфхь шьхЄ№, ўЄю
$\widetilde{q}_0(r)=q_0(r)$ фы  яюўЄш тёхї $r\in (0, 1).$ ═ръюэхЎ,
чрьхЄшь, ўЄю юЄюсЁрцхэшх $f$ эх яЁюфюыцрхЄё  яю эхяЁхЁ√тэюёЄш т
Єюўъє $x_0=0$ ттшфє єёыютш 
$\int\limits_{0}^{\varepsilon_0}\frac{dt}{t^{\frac{n-1}{\alpha-1}}q_{0}^{\,\frac{1}{\alpha-1}}(t)}<\infty.$
\end{proof}

╩╬═╥└╩╥═└▀ ╚═╘╬╨╠└╓╚▀

\medskip
\noindent{{\bf ╧хЄЁют ┼тухэшщ └ыхъёрэфЁютшў}\\
╚эёЄшЄєЄ яЁшъырфэющ ьрЄхьрЄшъш ш ьхїрэшъш ═└═ ╙ъЁршэ√, ъюьэ. 417,
єы. ухэхЁрыр ┴рЄ■ър, 19, у. ╤ырт эёъ, ╙ъЁршэр, 84 116,

\noindent e-mail: eugeniy.petrov@gmail.com

\medskip
\noindent{\bf ╤рышьют ╨єёырэ ╨рфшъютшў} \\
╚эёЄшЄєЄ ьрЄхьрЄшъш ═└═ ╙ъЁршэ√, єы. ╥хЁх∙хэъютёър  3, у. ╩шхт-4,
╙ъЁршэр, 01601, Єхы. +38 0956308592, e-mail: ruslan623@yandex.ru

\medskip
\noindent{\bf ╤хтюёЄ№ эют ┼тухэшщ └ыхъёрэфЁютшў }
\\
╞шЄюьшЁёъшщ уюёєфрЁёЄтхээ√щ єэштхЁёшЄхЄ шьхэш ╚трэр ╘Ёрэъю, єы.
┴юы№°р  ┴хЁфшўхтёър  40, у. ╞шЄюьшЁ, ╙ъЁршэр, 10008, Єхы. +38
0669595034, e-mail: esevostyanov2009@mail.ru

\end{document}